\theoremstyle{theorem}
\newtheorem{thm}{Theorem}[section]
\newtheorem{prop}[thm]{Proposition}
\newtheorem{lem}[thm]{Lemma}
\newtheorem{cor}[thm]{Corollary}
\numberwithin{equation}{section}
\theoremstyle{definition}
\newtheorem*{question*}{Question}
\newtheorem{defi}[thm]{Definition}
\newtheorem{lemma}[thm]{Lemma}
\theoremstyle{remark}
\newtheorem{rk}[thm]{Remark}
\newtheorem{example}[thm]{Example}
\renewcommand{\P}{\mathbb{P}}
\newcommand{\CP}{\mathbb{CP}^2}
\newcommand{\CPone}{\mathbb{CP}^1}
\newcommand{\FS}{{\rm FS}}
\newcommand{\C}{\mathbb{C}}
\newcommand{\sC}{\mathcal{C}}
\newcommand{\Cns}{\mathcal{C}^{\rm ns}}
\newcommand{\Z}{\mathbb{Z}}
\newcommand{\Q}{\mathbb{Q}}
\newcommand{\sD}{\mathcal{D}}
\newcommand{\hD}{\mathbb{D}}
\renewcommand{\th}{^{\rm th}}
\newcommand{\del}{\partial}
\newcommand{\co}{\colon}
\newcommand{\inv}{^{-1}}
\renewcommand{\epsilon}{\varepsilon}
\newcommand{\col}{{\rm col}}
\newcommand{\MG}[1]{\marginpar{\tiny \color{red} {\bf M:} {#1}}}
\newcommand{\Hadd}[1]{{\color{blue} {\bf H:} #1}}
\DeclareMathOperator{\im}{im}
\DeclareMathOperator{\Int}{Int}
\DeclareMathOperator{\Sing}{Sing}
\DeclareMathOperator{\ord}{ord}
\title[Oka and Alexander polynomials of symplectic curves]{Oka and Alexander polynomials of symplectic curves and divisibility relations}
\author{Hanine Awada}
\email{hawada@bcamath.org}
\address{Basque Centre for Applied Mathematics, Bilbao, Spain}
\author{Marco Golla}
\email{marco.golla@univ-nantes.fr}
\address{CNRS et Nantes Universit\'e, Laboratoire de Math\'ematiques Jean Leray, Nantes, France}
\begin{document}

\maketitle

\begin{abstract}
We prove Libgober's divisibility relations for Oka and Alexander polynomials of symplectic curves in the complex projective plane. Along the way, we give new proofs of the divisibility relations for the Alexander polynomials of complex algebraic curves with respect to a generic line at infinity.
\end{abstract}

\section{Introduction}
The study of fundamental groups of complements of plane complex curves (and, more generally, of complex projective hypersurfaces) has a very rich history, dating back at least to the work of Zariski and van Kampen ~\cite{zar1, zar2}.
Their \emph{pencil section method} allows to find a presentation of $\pi_1(\CP \setminus \sC)$ for a complex curve $\mathcal{C}$. In the early 80s, Moishezon~\cite{Mo} introduced the notion of \emph{braid monodromy} to give a similar presentation of the fundamental group; see also Libgober~\cite{Lib1}, Cohen and Suciu~\cite{Cohen} for further advances and for more references.

Distinguishing groups directly from a presentation is hard.
Like in classical knot theory, one can use the \emph{Alexander polynomial} as a proxy to study the fundamental group (see, for istance,~\cite[Section~7.D]{Rolfsen} or~\cite[Chapter~11]{Lickorish}). 
In the context of complex curves in $\CP$ (and, more generally, of projective hypersurfaces), the Alexander polynomial was introduced and studied by Libgober in~\cite{Libcyclic}. 
To define it, one needs to choose an auxiliary line $L$. We will denote it with $\Delta_{\sC,L}$, but we will often drop $L$ from the notation when it is understood.  Later on, \emph{Oka polynomials} were also introduced for curves  (see \cite{ACT-survey, Oka} for some generalities on Oka polynomials).

Moreover, Libgober discovered a beautiful relation, not at all apparent from the definition, between the Alexander polynomial of a curve, the local Alexander polynomials of the links of its singularities, and the Alexander polynomial of the \emph{link at infinity} of $\sC$ (with respect to $L$).  Same relations hold for Oka polynomials of plane algebraic curves (see \cite[Thm 2.10.]{ACT-survey}, \cite{CogolludoFlorens}).

\subsection{Main results}
We will prove that Libgober's divisibility relations hold also for symplectic curves. Here we adopt the terminology of~\cite{golla_starkston_2022}: a \emph{symplectic curve} is a (possibly singular) 2-dimensional submanifold of $\CP$ that is $J$-holomorphic for some almost-complex structure $J$ compatible with the Fubini--Study metric $\omega_\FS$ on $\CP$. For instance, (reduced) complex curves are by definition symplectic, and so are smoothly embedded symplectic surfaces of $\CP$.\\
Let $\sC \cup L$ be a symplectic curve. Call $K_1, \dots, K_\nu$ the links of its singularities, and $K_\infty$ the link at infinity of $\sC$ with respect to $L$ . We prove the following:
\begin{thm}\label{t:main1}
The Alexander polynomial  $\Delta_{\sC,L}$ of $\sC$ relative to $L$ satisfies the following divisibility relations: 
\begin{itemize}
\item[(i)] $\Delta_{\sC,L} \mid \Delta_{K_\infty}$;
\item[(ii)] $\Delta_{\sC,L} \mid \hat\Delta_{K_1}\cdots\hat\Delta_{K_{\nu}}\cdot (1-t)^{b_1(\sC \cup L)}$; if $\sC$ is irreducibile, one can get rid of the factors $1-t$ on the right-hand side.
\end{itemize}
\end{thm}

Here, $\hat\Delta_{K_i}$ is a suitably twisted version of the classical Alexander polynomial of $K_i$, which can equivalently be described as a specific evaluation of the multi-variable Alexander polynomial of $K_i$.
Whenever $K_i$ is the link of a singularity that is \emph{not} on the line at infinity $L$, $\hat\Delta_{K_i}$ is just the usual (one-variable) Alexander polynomial of $K_i$.
\begin{thm}\label{t:main}
 Let $\phi: \pi_1(\CP\setminus(\sC \cup L))\to \Z$ be an epimorphism sending each meridian of a component $\sC_j$ of degree $d_j$ of $\sC$ to a non-zero integer $a_j$, such that $\sum_j d_j a_j \ne 0$. Denote by  $\Delta^{\phi}_{\sC,L}$ the Oka polynomial  of $\sC$ with respect to $L$, then
\begin{itemize}
\item[(i)] $\Delta^{\phi}_{\sC,L} \mid \Delta^{\phi}_{K_\infty}$;
\item[(ii)] $\Delta^{\phi}_{\sC,L} \mid \hat\Delta_{K_1}\cdots\hat\Delta_{K_{\nu}}\cdot \prod_j(1-t^{a_j})^{-\chi(\Cns_j)+2}$;
\end{itemize}
where $\chi(\Cns_j)$ is the Euler characteristic of the non-singular part of $\sC_j$.\\
Moreover, if $\sC$ is a union of rational curves, then the second divisibility becomes $$\Delta^{\phi}_{\sC,L} \mid \hat\Delta_{K_1}\cdots\hat\Delta_{K_{\nu}}\cdot\prod_j (1-t^{a_j})^{-\chi(\Cns_j)+1}.$$ 
\end{thm}

In particular, Theorem~\ref{t:main1} recovers Libgober's result for complex curves. We would like to stress that our proofs are not just an adaptation of Libgober's proofs. In fact, we also give alternative proofs in the case of complex curves (at least under the assumption that $L$ is generic) that have a more low-dimensional flavour than the original arguments.

As a corollary, on the one hand we get strong restrictions on the polynomials that can arise as Alexander/Oka polynomials of symplectic curves, and on other we recover a folklore result (attributed to Zariski, and related to~\cite{zar3}) on Alexander polynomials of irreducible complex curves of prime power degree and we extend it to the symplectic setup.

\begin{cor}\label{c:cyclotomic}
The Alexander and Oka polynomial of a symplectic curve are $\CP$ is either $0$ or a product of cyclotomic polynomials.
\end{cor}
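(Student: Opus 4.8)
The plan is to read the corollary off Theorem~\ref{t:main}. The key algebraic remark is that a nonzero Laurent polynomial, viewed up to multiplication by a unit $\pm t^k$, is a product of cyclotomic polynomials exactly when all of its complex roots are roots of unity. Since $\Q[t]$ is a unique factorisation domain in which the cyclotomic polynomials $\Phi_d$ are precisely the irreducible factors of the polynomials $t^n-1$, any divisor of a product of cyclotomic polynomials is itself, up to a unit, a product of cyclotomic polynomials. Consequently it suffices to produce a single nonzero product of cyclotomic polynomials that $\Delta_{\sC,L}$ divides; the remaining possibility $\Delta_{\sC,L}=0$ is explicitly allowed by the statement.

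Assuming $\Delta_{\sC,L}\neq 0$, I would invoke Theorem~\ref{t:main}(i), which gives $\Delta_{\sC,L}\mid \Delta_{K_\infty}$. This reduces the corollary to the assertion that $\Delta_{K_\infty}$ is a nonzero product of cyclotomic polynomials. (Part~(ii) could be used in the same way: the factor $(1-t)^{b_1(\sC\cup L)}$ is a power of $\Phi_1$, so one would instead have to control the local factors $\hat\Delta_{K_i}$.)

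The geometric input is that $K_\infty$ is a(n iterated) torus link. Near $L$ the curve $\sC$ is $J$-holomorphic, and by the local structure theory of $J$-holomorphic curves its topology along $L$ --- including at tangencies and at singular points lying on $L$ --- agrees with that of a complex algebraic curve. Hence $K_\infty$ is, as an oriented link, the link at infinity of a complex curve, i.e.\ an algebraic link; the same structure theory identifies each singularity link $K_i$ with an algebraic link, which is what the route through part~(ii) requires.

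To conclude, one uses that the Alexander polynomial of an algebraic link is a nonzero product of cyclotomic polynomials: such links are fibred, their Alexander polynomial is (up to a unit) the characteristic polynomial of the geometric monodromy, and the Monodromy Theorem guarantees that this monodromy is quasi-unipotent, so its eigenvalues are roots of unity; for torus links this is also visible from the explicit formula, which is a ratio of polynomials $t^n-1$. Feeding this back through the divisibility of Theorem~\ref{t:main} and the factorisation argument of the first paragraph proves the corollary. I expect the genuine difficulty to lie in the geometric step: justifying the local topological model of the symplectic curve along $L$ (and at its singularities) that makes $K_\infty$, and in the alternative approach the links $K_i$, honestly algebraic with cyclotomic Alexander polynomials.
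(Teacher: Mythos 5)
Your overall strategy---deduce the corollary from Theorem~\ref{t:main} via the observation that, in the UFD $\Q[t,t^{-1}]$, any divisor of a nonzero product of cyclotomic polynomials is itself (up to a unit) a product of cyclotomic polynomials---is exactly the paper's, and the geometric input you identify (the local topological equivalence of symplectic and complex curve singularities, due to McDuff and Micallef--White) is indeed what the paper relies on and is recalled in Section~\ref{S2}. The gap is in the route you choose as primary. You reduce the corollary to the assertion that $\Delta_{K_\infty}$ is a \emph{nonzero} product of cyclotomic polynomials, supported by the claim that $K_\infty$ is an algebraic link, hence fibred with quasi-unipotent monodromy. This conflates links at infinity with links of singularities: $K_\infty$ is a splice of local singularity links with a Seifert-fibred piece, it need not be fibred, and its Alexander polynomial can vanish. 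The paper's own Remark~\ref{r:vanishingAlex} (a pencil of lines, with the line at infinity through the base point) gives a case where $K_\infty$ is an unlink and $\Delta_{K_\infty}=0$, so part~(i) alone cannot carry the argument in general. (Your monodromy argument is correct for the links $K_1,\dots,K_\nu$ of singularities, which are honest algebraic links.)

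The paper instead runs the argument through part~(ii), which you relegate to a parenthesis: each $K_i$ is topologically the link of a complex plane curve singularity, its classical Alexander polynomial is a nonzero product of cyclotomic polynomials, and for the marked links lying on $L$ the twisted polynomials $\hat\Delta_{K_i}$ are either $0$ or products of cyclotomic polynomials by Eisenbud--Neumann; the factor $(1-t)^{b_1(\sC\cup L)}$ is a power of the cyclotomic polynomial $t-1$. To be fair, either route leaves the same corner case---when the right-hand side of the chosen divisibility relation vanishes, no information is extracted and one must fall back on the other relation (the paper is similarly brief on this point)---but as written your proof rests on the specific false claims that $K_\infty$ is fibred and that $\Delta_{K_\infty}\neq 0$, and that is the concrete thing to fix.
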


\begin{rk}\label{r:cyclautomatic}
For instance, $\Delta_{\sC,L}$ (or $\Delta^{\phi}_{\sC,L}$)  does not vanish, and hence is a product of cyclotomic polynomials, in the following cases:
\begin{itemize}
\item if $L$ is transverse to $\sC$;
\item if $\sC$ is a line arrangement, except for case where $\sC \cup L$ is a pencil;
\item if $\sC$ is irreducible.
\end{itemize}
\end{rk}

\begin{cor}\label{c:primepowerdegreeirreducible}
Let $\sC \subset \CP$ be an irreducible symplectic curve whose order is a prime power.
Then the Alexander polynomial of $\sC$ with respect to a generic line at infinity is $1$.
\end{cor}

\subsection{History and perspectives}\label{S 1.2}
As mentioned above, the fundamental groups of complements of complex curves are in general not easy to compute. 
In some cases, we know a priori that the group is Abelian, and having a presentation clearly suffices to compute the fundamental group very explicitly. 
This is the case, for instance, of \emph{nodal} curves, curves whose singularities are only double points: see Zariski~\cite[Section~3]{Zariski}, Fulton~\cite{fulton}, Deligne~\cite{Deligne}.

In~\cite{zar2}, Zariski computed the fundamental groups of the complements of the now-called \emph{Zariski sextics}: this computation shows that the fundamental group of the complement of a complex curve is not in general determined by the degree of the curve and the types of its singularities, but it also depends on the position of the singularities. 
This lead to the definition of \emph{Zariski pairs}, pairs of curves with the same combinatorics (same degree, same singularities, same incidence relations between components) but with non-homeomorphic complements. 
For an excellent historical and mathematical account of Zariski pairs, we recommend the survey by Artal Bartolo, Cogolludo Agust\'in, and Tokunuga~\cite{ACT-survey}.

Zariski's examples are two irreducible sextics with six singular points, all of which are simple cusps (locally modelled on $\{y^2 = x^3\} \subset \C^2$), and therefore of genus $4$. One of the two sextics, $\sC_1$, has all of its singular points lying on a conic, and has fundamental group $\Z/2\Z * \Z/3\Z$, while the other one, $\sC_2$, does not, and it has Abelian fundamental group. Libgober
~\cite[Section~7]{Libcyclic} observed that the fact that the six cusps are not in general position translates into a non-vanishing result for the Alexander polynomial (see~Remark~\ref{r:noLib} below for some considerations on what happens in symplectic setup). More precisely, he proved that, with respect to a generic complex line $L$, $\Delta_{\sC_1, L}(t) = t\inv - 1 + t$, while $\Delta_{\sC_2,L}(t) = 1$. That is to say, the Alexander polynomial is an effective isotopy invariant of curves.

Currently, we are not aware of any polynomial that appears as the Alexander polynomial of a symplectic curve, but not of a complex curve. 
It would be interesting to find a symplectic curve $\sC$ such that $\Delta_{\sC,L}$ is not the Alexander polynomial of any complex curve of the same degree and with the same singularities (assuming that there exists one\footnote{Note that there exists examples of symplectic curves whose singularity types are realised by no complex curve; see~\cite[Section~8]{golla_starkston_2022} for a detailed discussion.}).
It would also be interesting to find two symplectic curves $\sC \cup L$, $\sC' \cup L'$ of the same degree and with the same singularities, that are not isotopic to any complex curve and such that $\Delta_{\sC,L} \neq \Delta_{\sC',L'}$.

As mentioned above, Libgober has defined and studied Alexander polynomials of complex curves (and, more generally, the homotopy type of their complement) in a series of papers~\cite{Libcyclic, Lib1,L2}. He has proved the aforementioned divisibility relations and the relationship between the position of the singularities and the Alexander polynomial. For a curve $\sC \subseteq \mathbb{C}\mathbb{P}^2$, he also gave an explicit CW complex (of dimension $2$) with the same homotopy type as the complement of $\sC$~\cite{Lib1}. In~\cite{GS2}, the second author and Starkston gave explicit (Stein) handlebody decompositions of complements of certain rational cuspidal curves, namely those defined by the equations $\{x^pz = y^{p+1}\}$ and $\{(yz - x^2)^p = xy^{2p-1}\}$ and a (smooth) handlebody decomposition of the (unique) curve of degree $8$ with a singularity of topological type $\{x^3 = y^{22}\}$. (Handlebody decompositions for complements of other rational curves with one cusps were given by Lekili and Maydanskiy~\cite{LekiliMaydanskiy}, albeit not phrased in this language.)

In fact, part of this work relies on ideas developed in~\cite{Lib1} and in a recent preprint by Sugawara~\cite{sugawara2023handle}. Sugawara promotes Libgober's cell complex to a $2$-handlebody, and their result will be one of the two crucial steps towards proving Theorem~\ref{t:main}. That complements of complex curves admit such a decomposition is a well-known fact, as curve complements are affine varieties, and hence Stein surfaces. As far as we know, whether the complement of a symplectic curve is a Stein/Weinstein manifold is an open question.

We expect that the topological statements we prove here can be used to extend some generalisations of Libgober's work to the symplectic setting. For instance, in~\cite{CogolludoFlorens} Cogolludo Agust\'in and Florens proved that
\begin{equation}\label{CF1}
(\Delta^{\phi}_{\sC,L})^2\ \vert \prod_{i=1,\dots,s,\infty} \Delta^{\phi_{K_i}}_{K_i} \cdot \prod_{j=1}^{\ell}(1-t^{a_j})^{s^{\rm aff}_j-\chi(\sC^{\rm aff}_j)},\end{equation}
here $\sC^{\rm aff}_j$ is  a component of the affine curve $\sC^{\rm aff}=\sC \setminus L=\bigcup_{j=1}^{\ell} \sC^{\rm aff}_j$, $s^{\rm aff}_j$ and $\chi(\sC^{\rm aff}_j)$ are the number of singularities of $\sC^{\rm aff}_j$ and its Euler characteristic respectively.\\For the Alexander polynomial, \ref{CF1} gives the following
\begin{equation}\label{CF}
(\Delta_{\sC,L})^2 \mid \hat\Delta_{K_1}\cdots \hat\Delta_{K_\nu}\cdot \Delta_{K_\infty}\cdot (1-t)^{s^{\rm aff}-\chi(\sC^{\rm aff})}, 
\end{equation}
with $s^{\rm aff}$ the number of singularities of $\sC^{\rm aff}$ (see~\cite[Theorem 2.10]{ACT-survey} and Remark \ref{Rk CF} below for a consideration of this divisibility in the symplectic setting).
In fact, they work with twisted Alexander polynomials and with Reidemeister torsion, and they explicitly compute the ratio between the two quantities above.
\begin{rk}\label{Rk CF}
Let $\ell$ be the number of irreducible components of $\sC$ and $\Cns$ the non-singular part of $\sC$, and $\sC^{\rm aff, ns}$ the non-singular part of $\sC^{\rm aff}$. For notational convenience,  $\sC_0$ will denote as well the line at infinity $L$.\\
By combining the two assertions of Theorem~\ref{t:main} and direct computations, we obtain the following
\[
(\Delta^{\phi}_{\sC,L})^2 \mid \hat\Delta_{K_1}\cdots \hat\Delta_{K_\nu}\cdot \Delta_{K_\infty}\cdot \prod_{j=0}^{\ell} (1-t^{a_j})^{-\chi(\Cns_j)+2}
\]
and if all $a_j=1$, we have that \[
(\Delta_{\sC,L})^2 \mid \hat\Delta_{K_1}\cdots \hat\Delta_{K_\nu}\cdot \Delta_{K_\infty}\cdot (1-t)^{-\chi(\sC^{\rm aff, ns})}\cdot (1-t)^{\ell -s^{\rm aff}}
\]
One can notice that, since $s^{\rm aff}- \chi({\sC^{\rm aff}})=-\chi(\sC^{\rm aff, ns})$, what changes from \eqref{CF} is the factor $(1-t)^{\ell-{s^{\rm aff}}}$ which \textit{a priori} can not be controlled (that is, the sign of $\ell-s^{\rm aff}$ varies depending on the configuration of $\sC \cup L$).
\end{rk}

In a different direction, it would be interesting to study other invariants associated to the fundamental group, like characteristic varieties. For instance, Arapura proved that the first characteristic variety of a complex curve is a union of complex tori which comes with a fairly restrictive embedding in a larger complex torus~\cite{Arapura} (see also~\cite[Section~2.2]{ACT-survey}). We do not know whether similar results hold in the symplectic category.

\subsection*{Organisation of the paper}
In Section~\ref{S2}, we mention some generalities on Alexander and Oka polynomials of symplectic plane curves and give a description of a handle decomposition of curve complements. In Section~\ref{S3}, after fixing the context, we prove the Alexander/Oka polynomials divisibility for symplectic curves.  A new proof of the Alexander polynomials divisibility relation for algebraic curves is given in the Appendix~\ref{appendix}.

\subsection*{Notation and conventions}
We denote the iterated connected sum of $k$ copies of a manifold $M$ by $M^{\#k}$.
We denote the direct sum (respectively, the free product) of $k$ copies of a group $G$ with $G^{\oplus k}$ (resp. $G^{*k}$).
Unless otherwise specified, homology and cohomology will be taken with integer coefficients.
An equality of Alexander polynomials is always to be understood up to invertible elements in the ring (which usually is $\Q[t,t\inv]$, so the equality is up to factors of the form $qt^k$).

\subsection*{Acknowledgements}
We would like to thank Erwan Brugall\'e, Anthony Conway, 
  Laura Starkston, Enrique Artal Bartolo and Jos\'e Ignacio Cogolludo for several interesting discussions and for providing us with some useful references.
We would also like to thank Thomas Kragh for pointing us in the direction of Remark~\ref{r:noLib}, and Vincent Florens for suggesting working with Oka polynomials. We were both supported by the \'Etoile Montante PSyCo from the R\'egion Pays de la Loire. H.A. was also supported by the Spanish Ministry of Science through the Severo Ochoa Grant SEV
2023-2026 and through the research project PID2020-114750GB-C33 and by the Basque
Government through the BERC 2022-2025 program.


\section{Preliminaries on Alexander polynomials and curves}\label{S2}

In this section, we will cover some preliminaries on symplectic curves and their complements, and on Alexander polynomials of knots and of curves.

\subsection{Symplectic curves}\label{ss:sympl_curves}

We recall here the basic definitions of symplectic curves, following~\cite{golla_starkston_2022}.

A \emph{symplectic curve} $\sD$ is the image $u(\Sigma)$ of a $J$-holomorphic map $u \co (\Sigma,j) \to (\CP, J)$, for some (possibly disconnected) Riemann surface $(\Sigma,j)$ and some $\omega_{\rm FS}$-compatible almost-complex structure $J$. If the map is one-to-one away from finitely many points, 

we say that $u$ (or, with a slight abuse of terminology, $\Sigma$) is the \emph{normalisation} of $\sD$. The image of a connected component of the normalisation $u$ of $\sD$ is called an \emph{irreducible component} of $\sD$. We denote with $\Sing(\sD)$ the set of points where $u$ fails to be one-to-one or where $du = 0$.

The \emph{geometric genus} $g_i$ of an irreducible component $D_i$ of $\sD$ is the genus of the corresponding connected component of $\Sigma$ in the normalisation.

Recall also from~\cite{Mcduff} and~\cite{MicallefWhite} that symplectic curves have singularities that are homeomorphic to singularities of plane complex curves. (More on these singularities in the next subsection.)

A partial converse to McDuff's and Micallef and White's results ensures that a singular symplectic submanifold with locally holomorphic singularities can be made $J$-holomorphic for some $J$. This allows us to play with \emph{local} deformations in the symplectic setup, and gives us some flexibility in our constructions. As an example, let us consider Libgober's relations between the position of the singularities of a curve and its Alexander polynomial. (We are grateful to Thomas Kragh for an enlightening conversation that leads to the following remark.)

\begin{rk}\label{r:noLib}

No `if and only if' analogue of Libgober's statement holds in the symplectic setup, in the sense that the position of the cusps (with respect to other $J$-holomorphic curves) does not affect the Alexander polynomial.
To be concrete, consider the Zariski sextic $\sC_1$ defined in Section~\ref{S 1.2}, whose six cusps lie on a same conic. 
The fact that six cusps of $\sC_1$ lie on a same $J$-holomorphic conic is not preserved under perturbations of $J$, not even through almost-complex structures for which $\sC$ is $J$-holomorphic. 
Let $J_{\rm st}$ denote the  standard complex structure, so that $\sC_1$ is $J_{\rm st}$-holomorphic. 
By construction, there is a $J_{\rm st}$-holomorphic conic $Q$ passing through the six cusps of $\sC_1$. 
Choose also a generic $J_{\rm st}$-holomorphic line $L$. 
One can take a small and generic symplectic perturbation $Q'$ of $Q$ supported in the neighbourhood of a cuspidal point $p$ of $\sC_1$, so that $Q'$ no longer passes through $p$. 
This perturbation will make $\sC_1 \cup Q' \cup L$ be everywhere symplectic, with singularities that are modelled on singularities on complex curve: either because we have not deformed $Q$ or because we created some transverse double points (near $p$). 
Results of McDuff~\cite{Mcduff} and Micallef and White~\cite{MicallefWhite} guarantee that $\sC_1 \cup L \cup Q'$ is $J$-holomorphic for some almost-complex structure $J$. 
Since we have not perturbed either of $\sC_1$ or $L$, the Alexander polynomial of $\sC_1$ with respect to $L$ is $t\inv-1+t$, but there is no $J$-holomorphic conic passing through all six cusps of $\sC_1$. 
(There is a unique $J$-holomorphic conic passing through five points~\cite{Gromov}.)

It is possible that one of the directions of Libgober's theorem holds in the symplectic setup.
Very concretely: suppose that $\sC \cup Q \cup L$ is a symplectic curve such that $Q$ is a conic passing through five singularities of $\sC$ that are simple cusps, and $L$ is a generic line. 
Is $\Delta_{\sC \cup L}$ divisible by $t\inv - 1 + t$?
\end{rk}

Given a symplectic curve $\sD \subset \CP$ and $p \in \sD$, denote with $\beta(p)$ the number of branches of $\sD$ meeting at $p$. In particular $\beta(p) = 1$, if and only if $\sD$ is non-singular at $p$ or has a cuspidal singularity at $p$. For instance, a node $p$ has $\beta(p) = 2$.

\begin{lem}\label{b1}
Let $\sD$ be a symplectic curve in $\CP$ with $c$ irreducible components, $D_1,\dots D_c$. As a subspace of $\CP$,
\[
\chi(\sD) = \sum_{j = 1}^c (2-2g_j) - \sum_{p \in \Sing(\sD)} (\beta(p)-1).
\]
In particular, $b_1(\sD) = 2\sum_j g_j + \sum_{p \in \Sing(\sD)} (\beta(p)-1) - c + 1$.
\end{lem}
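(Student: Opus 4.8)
The plan is to compute the Euler characteristic of $\sD$ directly from its normalisation, treating $\sD$ as the image of the normalisation map $u \co \Sigma \to \CP$ and accounting for the gluing that happens at the singular points. I would first establish the formula for $\chi(\sD)$ and then deduce the formula for $b_1(\sD)$ purely algebraically from the definition of the Euler characteristic of the (connected, away from the singularities) $1$-complex that $\sD$ represents.

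\medskip

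\emph{Step 1: the Euler characteristic of the normalisation.} The normalisation $\Sigma$ is a disjoint union of $c$ closed Riemann surfaces, one for each irreducible component $D_j$, of genus $p_g(D_j)$. Hence $\chi(\Sigma) = \sum_{j=1}^c (2 - 2p_g(D_j))$, which is exactly the first sum in the statement.

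\medskip

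\emph{Step 2: passing from $\Sigma$ to its image $\sD$.} The map $u$ is a homeomorphism away from the finite set $\Sing(\sD)$, so the only discrepancy between $\chi(\Sigma)$ and $\chi(\sD)$ comes from the points of $\Sing(\sD)$, where $u$ identifies several preimages into a single point. At a singular point $p$ with $\beta(p)$ local branches, there are exactly $\beta(p)$ preimages in $\Sigma$ (one per branch), and these get collapsed to the single point $p$ in $\sD$. Each such identification reduces the number of $0$-cells by $\beta(p) - 1$ without changing any higher-dimensional cells, so it decreases the Euler characteristic by exactly $\beta(p) - 1$. Summing over all singular points gives
\[
\chi(\sD) = \chi(\Sigma) - \sum_{p \in \Sing(\sD)} (\beta(p) - 1) = \sum_{j=1}^c (2 - 2p_g(D_j)) - \sum_{p \in \Sing(\sD)} (\beta(p) - 1),
\]
which is the first claim. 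The point I need to be careful about is that cuspidal singularities, where $du = 0$ but $u$ is still injective, contribute $\beta(p) = 1$ and hence nothing to the correction sum; this is consistent with the observation in the text that such points have $\beta(p) = 1$, and topologically a cusp does not change the local homeomorphism type (the image is still locally a disk), so it genuinely contributes no correction.

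\medskip

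\emph{Step 3: from $\chi(\sD)$ to $b_1(\sD)$.} As a subspace of $\CP$, $\sD$ is a connected (assuming $\sD$ is connected; in general one works component-wise or notes that the singularities glue components together) compact space homotopy equivalent to a $1$-complex, so $b_0 = 1$, $b_2 = 0$, and $\chi(\sD) = b_0 - b_1(\sD) = 1 - b_1(\sD)$. Solving for $b_1(\sD)$ and substituting the expression from Step 2 yields
\[
b_1(\sD) = 1 - \chi(\sD) = 2\sum_{j=1}^c p_g(D_j) + \sum_{p \in \Sing(\sD)} (\beta(p) - 1) - c + 1,
\]
as claimed. The one hypothesis worth flagging is the connectedness of $\sD$: the formula $\chi = 1 - b_1$ uses $b_0 = 1$, so either $\sD$ must be assumed connected or one checks that the singularities glue the $c$ components into a connected space; I expect this mild point, together with the correct bookkeeping of branches at cusps versus nodes, to be the only genuine subtlety, the rest being the standard additivity of $\chi$ under cell identifications.
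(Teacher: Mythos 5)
Your Steps 1 and 2 match the paper's argument for the Euler characteristic formula: the normalisation is a homeomorphism away from the singular set, and collapsing the $\beta(p)$ preimages of each singular point drops $\chi$ by $\beta(p)-1$. That part is fine.

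Step 3, however, contains a genuine error. You assert that $\sD$ is homotopy equivalent to a $1$-complex, so that $b_2(\sD)=0$ and $\chi(\sD)=1-b_1(\sD)$. This is false: $\sD$ is a closed (singular) real surface in $\CP$, and $H_2(\sD)$ is freely generated by the fundamental classes of its $c$ irreducible components, so $b_2(\sD)=c$. The correct relation is $\chi(\sD)=b_0-b_1+b_2=1-b_1(\sD)+c$, i.e.\ $b_1(\sD)=1+c-\chi(\sD)$, which is what the paper uses. Your displayed identity
\[
1-\chi(\sD)=2\sum_j p_g(D_j)+\sum_p(\beta(p)-1)-c+1
\]
is in fact arithmetically off by $c$: substituting the formula for $\chi(\sD)$ from Step 2 gives $1-\chi(\sD)=2\sum_j p_g(D_j)+\sum_p(\beta(p)-1)-2c+1$. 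So you have written the correct final answer but derived it from a false premise via a computation that does not actually produce it. Finally, the connectedness of $\sD$, which you only flag as ``worth checking,'' does need a one-line justification: any two closed curves in $\CP$ have positive homological intersection number and hence meet, so $\sD$ is connected and $b_0=1$.
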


\begin{proof}
The normalisation map $\tilde\sD \to \sD$ is an isomorphism away from the preimage of the singular points. At each singular point $p \in \Sing(D)$, it collapses $\beta(p)$ points in $\tilde\sD$ onto a single point in $\sD$. It follows that $\chi(\sD) = \chi(\tilde\sD) - \sum_p (\beta(p)-1)$. The first statement readily follows by definition of the geometric genus.

For the computation of $b_1(\sD)$, it suffices to observe that $\sD$ is connected as a subset of $\CP$, since every two curves intersect at least once, and that $H_2(\sD)$ is freely generated by the fundamental classes of its components, so its rank is $c$.
\end{proof}

We now turn to studying neighbourhoods of symplectic curves (or, more generally, of PL-immersed surfaces).
The description of a closed regular neighbourhood $N_{\sD}$ of a symplectic curve $\mathcal{D}$ in a symplectic surface we give is inspired by~\cite{BorodzikHeddenLivingston, BCG1}, where the case of cuspidal curves is spelled out in detail.

Let us set up some notation first. As above, call $D_1, \dots, D_c$ the irreducible components of $\sD$. Call their geometric genera $g_1, \dots, g_c$ and their degrees $d_1, \dots, d_c$, respectively. Let $s$ be the number of singularities of $\sD$, and let $\{p_1, \dots, p_s\}$ be the singular points of $\sD$. Denote with $\beta_{ij}$ the number of branches of the $j\th$ component at the point $p_i$, so that $\beta_{i1}+\dots+\beta_{ic} = \beta(p_i)$ for each $i$. Denote also with $K_i$ 
 the link of the singularity of $\sD$ at $p_i$. For every $i$, each component of 
$K_i$ corresponds to a branch of the singularity of $\sD$ at $p_i$, which in turn belongs to the component $D_j$ for some $j$: we \emph{colour} this component of $K_i$ with $j$.

In order to construct the neighbourhood $N_\sD$ of $\sD$, first we take a small 4-ball $H^0_i$ for each singular point of $\sD$, and a collection of paths $\gamma_k \subset \sD \setminus \Int(H^0_i)$ with endpoints in $\cup_i\del H^0_i$ and cutting $\sD$ into a union of exactly $c$ discs. Note that we need exactly $b_1(\sD) + s - 1$ arcs to do so, and we know from Lemma~\ref{b1} that $b_1(\sD) = 2\sum_i g_i + \sum_{j} (\beta(p_j)-1) - c + 1$, so that the total number of arcs needed is $2\sum_i g_i + \sum_j \beta(p_j) - c$. Call $H^1_k$ a regular neighbourhood of $\gamma_k$, intersecting each $H^0_i$ only in $\del H^0_i \cap \del H^1_k$ and $\sD$ in a regular neighbourhood (in $\sD$) of $\gamma_k$.

A handle decomposition of a regular neighbourhood $N_\sD$ of $\sD$ is now given by:
\begin{itemize}
\item one 0-handle $H^0_i$ for each $p_i$;
\item one 1-handle $H^1_k$ for each $\gamma_k$, obtained by thickening $\gamma_k$---observe that 
the boundary of the 1-handlebody $N^1_\sD$ constructed so far intersects $\sD$ in a collection of exactly $c$ knots, each corresponding to a component of $\sD$, or, equivalently, to a colour between $1$ and $c$;
\item one 2-handle $H^2_j$ for each component of $\sD$, attached along the $j$-coloured component of the intersection $\sD\cap \del N^1_\sD$, where $N^1_{\sD}$ is the 1-skeleton we constructed so far, each with framing $d_j^2$.
\end{itemize}

Note that $N^1_\sD$ is connected, since $\sD$ is, and that $\del N^1_\sD = (S^1 \times S^2)^{\# b_1(\sD)}$. For notational convenience, from now on let $b := b_1(\sD)= 2\sum_i g_i + \sum_{j} (\beta(p_j)-1) - c + 1$. We want to describe the attaching link $J = J_1 \cup \dots \cup J_c \subset (S^1 \times S^2)^{\# b}$ of the 2-handles $H^2_1, \dots, H^2_c$.

\begin{defi}
The \emph{Borromean knot} $B_{g}\subset (S^1\times S^2)^{\# 2g}$ is described in Figure~\ref{f:borromean-knot}.
\end{defi}

For instance, $B_1$ is the knot obtained by doing 0-surgery along two components of the Borromean rings (Figure~\ref{f:borromean-rings}), and $B_{g+1} = B_g \# B_1$.

\begin{figure}
\labellist
\small\hair 2pt
\pinlabel $\underbrace{\hphantom{----------------------.}}_g$ at 228 45
\pinlabel $B_{2g}$ at -20 20
\endlabellist
\centering
\includegraphics[width=0.75\textwidth]{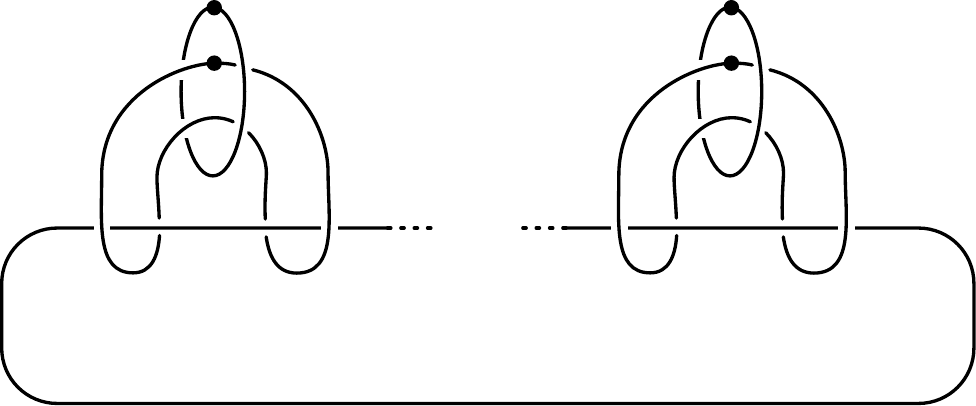}
\caption{The Borromean knot $B_{2g}$. The dotted curves represent 1-handles (or 0-surgeries), see~\cite{GompfStipsicz}.}\label{f:borromean-knot}
\end{figure}

\begin{figure}
\includegraphics[width=0.3\textwidth]{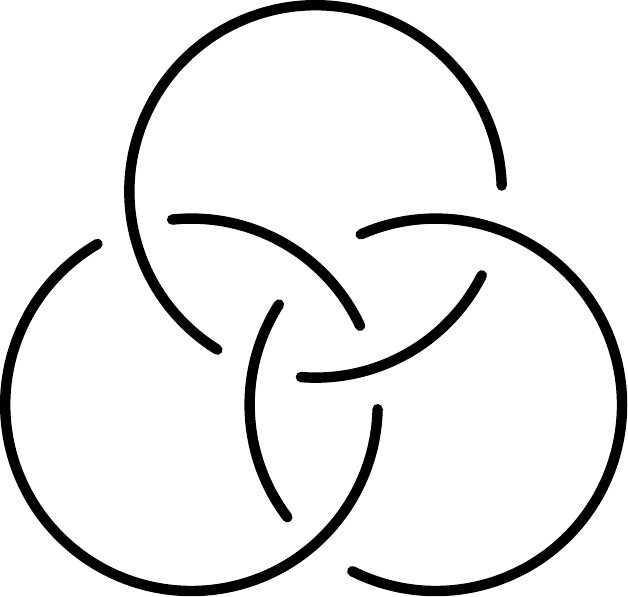}
\caption{The Borromean rings.}\label{f:borromean-rings}
\end{figure}

\begin{defi}
Let $K$ be a link in a 3-manifold $M$, coloured by integers $i_1, \dots, i_{c'}$ and such that each colour appears at least once. Choose two components $K_1, K_2$ of $K$ with the same colour, and knotify them, to get a coloured link 
$K' \subset M \# S^1\times S^2$ with one less component than 
$K$. Iterate this procedure until you get a link 
$\kappa_\col(K) \subset M_\col$ with $c'$ components, each coloured by an index $i_1, \dots, i_{c'}$. We call $(M_\col, \kappa_\col(K))$ 
the \emph{coloured knotification} of $K$
\end{defi}

The following proposition formalises the construction of the attaching link $J$ of the $2$-handles of $N_\sD$. The proof is omitted, as it is an easy adaptation of~\cite[Theorem~3.1]{BorodzikHeddenLivingston} or~\cite[Lemma~3.1]{BCG1}.

\begin{prop}
The attaching link $J$ of the $c$ $2$-handles of $N_\sD$ is the \emph{coloured knotification} of the \emph{coloured connected sum} of all links of singularities of $\sD$ followed by a coloured connected sum with the $j$-coloured Borromean knot $B_{g_j}$ for $j=1,\dots,c$. The framing of the $j$-coloured component of $J$ (which is null-homologous in $\del N^1_\sD$) is $d_j^2$.
\end{prop}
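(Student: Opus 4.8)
The plan is to identify the attaching link of the 2-handles by tracking exactly how each component of $\sD$ appears inside $\del N^1_\sD$ as we build up the 1-skeleton. The key observation is that the handle decomposition is built in stages that directly mirror the two operations in the statement: the 0-handles $H^0_i$ record the links of singularities, and the 1-handles $H^1_k$ encode both the knotification moves (when a 1-handle joins two arcs of the same colour) and the Borromean summands (when a 1-handle runs along a nontrivial loop in a positive-genus component). First I would recall the model situation from~\cite[Theorem~3.1]{BorodzikHeddenLivingston} and~\cite[Lemma~3.1]{BCG1}: there, a single cuspidal disc is pushed into a ball, its boundary is the link of the singularity, and attaching a 1-handle along an arc on the surface performs precisely a knotification of the boundary link. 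The present setting is the multi-component, positive-genus generalisation, so the proof should be organised around this dictionary.

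The main steps I would carry out are as follows. First, analyse a single 0-handle $H^0_i$: the intersection $\sD \cap \del H^0_i$ is by definition the link $K_i$ of the singularity at $p_i$, with each component coloured by the component $D_j$ of $\sD$ it belongs to; this is exactly the colouring described before the Borromean definition. Second, observe that taking the disjoint union over all $i$ and recording the colours gives the \emph{coloured connected sum} of all the $K_i$ after the first arcs are attached---more precisely, each 1-handle $H^1_k$ that connects two different $\del H^0_i$ effects a coloured connected sum of the corresponding links (and a standard connected sum is the special case where the joined branches sit on a common disc). Third, handle the arcs $\gamma_k$ that have both endpoints on the boundary of a \emph{single} region: attaching such a 1-handle along an arc whose two feet carry the same colour is, by the local model, exactly the knotification move of the Borromean-type definition, reducing the component count by one and introducing an $S^1 \times S^2$ summand. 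This accounts for the passage to $\kappa_\col$. Fourth, isolate the arcs used to cut each genus-$g_j$ component down to a disc: there are $2g_j$ of these, and the local picture of pushing a handle of a genus-$g$ surface into a ball produces precisely the Borromean knot $B_{g_j}$ by the connected-sum recursion $B_{g+1} = B_g \# B_1$ recalled after the definition. Assembling these contributions colour by colour yields the stated description of $J$. Finally, the framing statement follows because the $j$-coloured component is null-homologous in $\del N^1_\sD \cong (S^1\times S^2)^{\#b}$, so it bounds a Seifert surface there and the self-intersection is computed by the square of the degree, giving framing $d_j^2$; this is the standard adjunction-type computation, identical to the cuspidal case.

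The hard part will be bookkeeping the interaction between the three roles the 1-handles play---connecting distinct singularity balls, knotifying same-coloured branches within one ball, and carving out the genus---and verifying that the \emph{order} in which one performs the coloured connected sums, the knotifications, and the Borromean sums does not affect the resulting coloured link up to the relevant equivalence. In particular I would need to check that the total count of 1-handles, namely $2\sum_i g_i + \sum_j \beta(p_j) - c$ as computed before the proposition via Lemma~\ref{b1}, splits correctly into the arcs performing connected sums/knotifications and the $2\sum_j g_j$ arcs producing the Borromean summands, so that nothing is double-counted. Since the proposition is stated with its proof omitted as a routine adaptation, I would in practice reduce to citing~\cite[Theorem~3.1]{BorodzikHeddenLivingston} and~\cite[Lemma~3.1]{BCG1} once the above dictionary is set up, rather than re-proving the local handle moves from scratch.
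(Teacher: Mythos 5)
Your proposal is correct and takes essentially the same approach the paper intends: the paper omits the proof, describing it as an easy adaptation of \cite[Theorem~3.1]{BorodzikHeddenLivingston} and \cite[Lemma~3.1]{BCG1}, which is precisely the reduction you set up, with the same dictionary (0-handles carrying the coloured links $K_i$, 1-handles realising coloured connected sums, knotifications, and the $2g_j$ Borromean arcs, and framing $d_j^2$ from the self-intersection $[D_j]^2$ read against the Seifert framing of the null-homologous attaching circle). Your arc count $2\sum_j g_j + \sum_i \beta(p_i) - c$ and its split into $\sum_i\beta(p_i)-c$ component-reducing arcs plus $2\sum_j g_j$ genus arcs matches the paper's bookkeeping.
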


See Figure~\ref{f:example-handledec} for an example.

\begin{figure}
\includegraphics[width=0.65\textwidth]{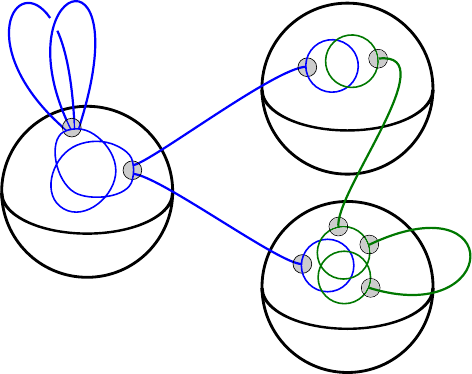}
\caption{A schematic example of a handle decomposition of the neighbourhood of a reducible curve: here the curve has two components $C_{\rm b}$ and $C_{\rm g}$, and we colour the components of the links of their singularities by blue or green according to which curve they belong to. In the left-most ball, we see a trefoil and the two blue arcs represent a connected sum with a (blue) Borromean knot $B_1$, while the other blue components are unknots, so $C_{\rm b}$ has genus 1 and one simple cusp. In the bottom-most ball we see a green Hopf link, and the arc connecting its two components represents their knotification, so $C_{\rm g}$ has genus 0 and one singularity which is a simple node. $C_{\rm b}$ and $C_{\rm g}$ intersect at two points, which can be seen from the two balls on the right. On the top one we see a Hopf link, so $C_{\rm b}$ and $C_{\rm g}$ are non-singular and intersect transversely at that point. On the bottom ball we see a triple Hopf link (three fibres of the Hopf fibration), two green and one blue: this means that $C_{\rm b}$ and $C_{\rm g}$ intersect at the node of $C_{\rm g}$, in a generic way. The other arc connect the various coloured links along the corresponding curves. In this case, the result is a 2-component link in $(S^1\times S^2)^{\#4}$.}\label{f:example-handledec}
\end{figure}


\subsection{Handle decompositions of complex curve complements}\label{Handle}
Recall that the complement of a complex curve is an affine complex surface, hence it is a Stein surface, and as such it admits a handle decomposition with handles of indices 0, 1, and 2. In a similar spirit, Libgober had given an explicit 2-dimensional finite CW complex with the same homotopy type as the complement~\cite{Lib1}. Very recently, Sugawara promoted Libgober's cell complex to a handle decomposition of the complement. In this section, we briefly recall Sugawara's work, which we will adapt to the symplectic context to prove the divisibility relations. 

To fix the framework, we recall first the construction used to define the braid monodromy of a plane algebraic curve which allows to understand the topology of a (singular) plane curve, to find a representation of the fundamental group of its complement and to give an explicit handle decomposition of the latter (see~\cite{Mo, Lib1, Cohen} for more details).

Let $\sC$ be an algebraic curve in $\C\P^2$ of degree $d$ with $\ell$ components. 
Let $L$ be a line at infinity with $\sC^+=\sC \cup L$. After a suitable change of coordinates, we may assume that $p=[0:0:1]$ 
belongs to $L$ but not to $\sC$.

Consider the linear projection from $p$, $\pi \co \C\P^2 \setminus L \to \C$.
Let $\mathcal{Y}=\{y_1,\dots,y_s\}$ be the set of points in $\C$ for which the fibers of $\pi$ contains at least one singular point of $\sC$ or are tangent to $\sC$. 
If each point $\pi\inv(y_i)$ either contains a unique singularity of $\sC$ or is simply-tangent to $\sC$ at a single point for each $i$, we say that the projection $\pi$ is \emph{generic}.
For each $i$, let $\gamma_i$ be a loop based at $y_0 \in \C\setminus \mathcal{Y}$ which goes counterclockwise around $y_i$ once does not wind around any other $y_j$ for $j\neq i$.

The fibers of $\pi$ are complex lines $L_y := \pi\inv(y) \simeq \C$. If $y \not \in \mathcal{Y}$, then $L_y$ intersects $\sC$ transversely in $d$ points. The map $\pi$ is a singular fibration $\C\P^2\setminus\sC^+ \to \C$, and its restriction to the complement of $\pi\inv(\mathcal{Y})$ is a locally trivial fibration
\[
\CP \setminus(\sC^+ \cup L_{y_1} \cup \dots \cup L_{y_s}) \to \C \setminus \mathcal{Y}
\]
with fiber $F \cong \C \setminus \{1,\dots,d\}$. Since the mapping class group of $F$ (relative to the boundary at infinity) is the braid group $B_d$ on $d$ strands, the monodromy of this fibration is a morphism
\[
\pi_1(\C \setminus \mathcal{Y}, y_0) \to B_d,
\]
which is called the \emph{braid monodromy} of the curve $\sC$ with respect to the line at infinity $L$. Concretely, this morphism takes the loop $\gamma_i$ to a braid describing the motion of the $d$ points in the intersection of the curve and the fibers as one moves along $\gamma_i$.

\begin{figure}[ht]
\labellist
\pinlabel $\hD$ at 10 10
\pinlabel $y_0$ at 84 24
\pinlabel $U_1$ at 18 83
\pinlabel $U_2$ at 51 118
\pinlabel $U_s$ at 167 67
\pinlabel $S_1$ at 59 36
\pinlabel $S_2$ at 71 52
\pinlabel $S_s$ at 124 36
\pinlabel $\ddots$ at 105 65
\pinlabel $\hD$ at 212 10
\pinlabel $y_0$ at 286 24
\pinlabel $U_0$ at 310 18
\pinlabel $U_1$ at 220 83
\pinlabel $U_2$ at 253 118
\pinlabel $U_s$ at 369 67
\pinlabel $\nu(S_1)$ at 252 32
\pinlabel $\nu(S_2)$ at 293 75
\pinlabel $\nu(S_s)$ at 331 32
\pinlabel $\ddots$ at 307 65
\endlabellist
\includegraphics[scale=1.05]{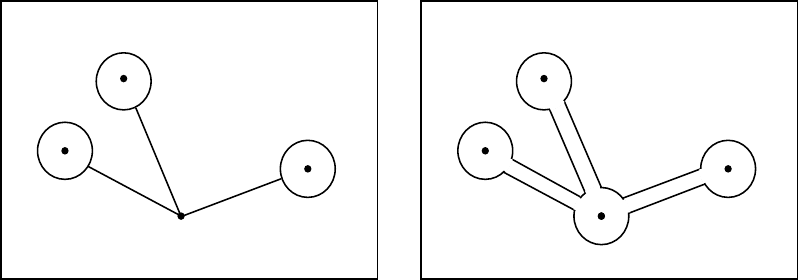}
\caption{\label{1l} $\Gamma'$ and its neighbourhood $\nu(\Gamma')$ in $\mathbb{D}.$}
\end{figure}

We denote by $U_i$ a small disc in $\C$ centered at $y_i \in \mathcal{Y}$ for $i\in\{1,\dots,s\}$, and by $S_i$ a system of non-intersecting paths connecting the basepoint $y_0$ of $\C$ with a point $y_i'$ on the boundary of $U_i$. We denote by $\hD$ a sufficiently large disk containing the system $\bigcup_i (S_i \cup U_i)$.
Let $S_0$ be a path connecting $y_0$ to $\del \hD$, intersecting $\bigcup_i (S_i \cup U_i)$ only in $y_0$.
Trivialise $\pi$ over $\hD$ and let $\hD_y$ be a disc in the fibres (with respect to this trivialisation) that contains all intersections of the fibres of $\pi$ and $\sC$.
$\hD\times \hD_y$ contains a regular neighbourhood $\nu(\sC)$ of $\sC$, and this assumption on $\hD_y$ guarantees that $\nu(\sC) \cap \del(\hD\times\hD_y)$ is contained in $(\del\hD) \times \hD_y$ (in other words, $\sC$ does not escape from $\hD\times \hD_y$ ``vertically'', but only ``horizontally''); see Figure~\ref{2l}.

Note that the interior of $(\hD\times\hD_y)\setminus \nu(\sC)$ is diffeomorphic to $\C\P^2\setminus \sC^+$. We denote by $\pi_0$ be the restriction of $\pi$ to $(\hD\times\hD_y)\setminus \nu(\sC)$.

With the above setting, Sugawara gives an explicit handle decomposition of the complement of a plane curve. We recall first an easy Lemma.

\begin{lem}[\cite{sugawara2023handle}]\label{LemSug}
Let $X$ and $Y$ be compact manifolds with boundary, with $\dim X = \dim Y + 1$. Let $Y^+$ denote $[0,1]\times Y$ and $Y_t = \{t\} \times Y \subset Y^+$.
\begin{itemize}
\item If $\iota \colon Y^+ \hookrightarrow X$ is an embedding with $\iota(Y^+) \cap \del X = \iota(Y_1)$, then, after smoothing corners, $\overline{X \setminus \iota(Y^+)}$ is diffeomorphic to $X$.
\item If $\iota_0\colon Y_0 \to \del X$ is an embedding, then, after smoothing corners, $X \cup_{\iota_0} Y^+$ is diffeomorphic to $X$.
\end{itemize}
\end{lem}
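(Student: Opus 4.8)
The plan is to derive both assertions from the collar neighbourhood theorem together with a single reparametrisation of the collar coordinate; since the two bullets describe inverse operations, I would prove the second in detail and deduce the first by the same device.

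For the second bullet I would first fix, via the collar neighbourhood theorem, a closed collar $c\co \del X \times [0,1] \to X$ with $c(x,0)=x$. As $\dim Y = \dim \del X$, the image $\iota_0(Y)\subseteq \del X$ is a compact codimension-$0$ submanifold with boundary, so $c$ restricts to an internal collar of $\iota_0(Y)$ inside $X$. The external collar $Y^+$ is glued along $Y_0 \cong \iota_0(Y) = c(\iota_0(Y)\times\{0\})$, and together with this internal collar it forms a bicollar $\iota_0(Y)\times[-1,1]$ in $X\cup_{\iota_0}Y^+$, the attached part being the $[-1,0]$ half and the part lying in $X$ being the $[0,1]$ half. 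I would then take the desired map to be the identity off this bicollar, and on the bicollar to be $\id_{\iota_0(Y)}$ times an orientation-preserving diffeomorphism of the interval that collapses $[-1,1]$ onto $[0,1]$, sends the free face $Y_1$ to the original $\del X$, and is the identity near the interior end. Away from the bicollar this map is the identity, so the two pieces agree and assemble into a diffeomorphism $X\cup_{\iota_0}Y^+ \cong X$.

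For the first bullet, the hypothesis $\iota(Y^+)\cap\del X=\iota(Y_1)$ says precisely that $\iota(Y^+)$ is an internal collar of the subsurface $\iota(Y_1)\subseteq \del X$ meeting $\del X$ only along its top face $\iota(Y_1)$. Choosing (again by the collar theorem) a collar of $\iota(Y_0)$ in $\overline{X\setminus\iota(Y^+)}$ and concatenating it with $\iota(Y^+)$ produces a bicollar in $X$, and the same interval reparametrisation---now shrinking this bicollar onto the complementary collar and fixing the end in the interior of $\overline{X\setminus\iota(Y^+)}$---exhibits $X \cong \overline{X\setminus\iota(Y^+)}$. Equivalently, this is the inverse of the second bullet: re-attaching an external collar along $\iota(Y_0)$ recovers $X$.

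The only genuine technical point, and the reason for the ``after smoothing corners'' clause, is the edge along $\iota_0(\del Y)$ (respectively $\iota([0,1]\times\del Y)$), where the attached or removed collar meets the remaining boundary $\del X\setminus\iota_0(Y)$ at a right angle. I would arrange the collar $c$ and the interval reparametrisation to be product-like in a neighbourhood of $\del Y$, so that after the standard smoothing of corners the reparametrisation extends smoothly across this edge; this is what upgrades the construction from a homeomorphism to an honest diffeomorphism.
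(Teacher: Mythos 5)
The paper does not actually prove this lemma: it is imported verbatim from Sugawara's preprint \cite{sugawara2023handle} and used as a black box, so there is no in-paper argument to measure yours against. Your collar-neighbourhood strategy is the standard (and surely the intended) one, and your reduction of the first bullet to the second is sound in spirit; note only that $X$ is recovered from $\overline{X\setminus\iota(Y^+)}$ by re-attaching $Y^+$ along $Y_0\cup([0,1]\times\del Y)$, not along $Y_0$ alone --- this is still an instance of the second bullet because $Y_0\cup([0,1]\times\del Y)$ is, after smoothing, a copy of $Y$ with $Y^+$ a collar on it, but as stated the identification is off.

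The step that fails as written is the construction of the diffeomorphism in the second bullet. You declare the map to be the identity off the bicollar $\iota_0(Y)\times[-1,1]$ and $\id_{\iota_0(Y)}\times\lambda$ on it, with $\lambda$ a fixed diffeomorphism carrying $[-1,1]$ onto $[0,1]$. Along the lateral face $\del\iota_0(Y)\times(0,1]$ the bicollar abuts points of $X$ where your map is the identity, while on the bicollar it sends $(x,t)\mapsto(x,\lambda(t))\ne(x,t)$; the two prescriptions disagree, so the map is not even continuous there. Your proposed remedy --- making the reparametrisation ``product-like near $\del Y$'', i.e.\ the identity there --- cannot work either, since the identity does not collapse $[-1,1]$ onto $[0,1]$, so the external collar would fail to be absorbed near $\del\iota_0(Y)$. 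The correct fix is to let the interval map depend on the base point: after smoothing, the attached region becomes $\{(x,t):-g(x)\le t\le 0\}$ for a smooth $g\ge 0$, positive on $\Int\iota_0(Y)$ and vanishing to infinite order along $\del\iota_0(Y)$, and one uses $(x,t)\mapsto(x,\lambda_x(t))$ with $\lambda_x\co[-g(x),1]\to[0,1]$ a smooth family of diffeomorphisms equal to the identity near $t=1$ and degenerating to the identity as $g(x)\to 0$. Checking that this family is jointly smooth at the degeneration locus is the only real content of the proof; without that step your construction yields a homeomorphism, not the claimed diffeomorphism.
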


\begin{figure}
\labellist
\pinlabel $\nu(\sC)$ at 120 145
\pinlabel $\hD$ at 30 10
\pinlabel $\hD_y$ at 260 120
\pinlabel $U_0$ at 60 33
\pinlabel $\nu(S_i)$ at 110 33
\pinlabel $B_i$ at 138 33
\pinlabel $V_i$ at 167 33
\pinlabel $A_i$ at 182 45
\pinlabel $U_i$ at 199 55
\endlabellist
\includegraphics[scale=1.2]{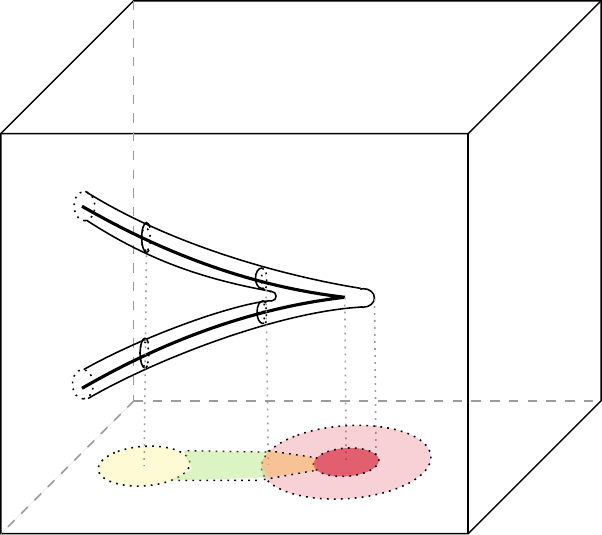}
\caption{The decomposition of $(\hD\times \hD_y)\setminus \nu(\sC)$.}\label{2l}
\end{figure}

The main idea for a handle decomposition of $\C\P^2 \setminus \sC^+$ is to decompose it into multiple parts starting mainly with the neighbourhood of each singularity and study their diffeomorphism types then see how each part is attached to another as follows. Let $\nu(\Gamma)= \bigcup_{i=1}^{s} U_i \cup \bigcup_{i=0}^{s} \nu(S_i)$, the union of small discs around the singular values of $\pi$ in $\hD$ and a neighbourhood of the  non-intersecting arc system connecting the discs to the base point and the one connecting the base point to $\del \hD$ (see \cite[Figure 4]{sugawara2023handle}).
In this setting, $\hD \setminus \nu(\Gamma)$ is contractible and $\pi_0^{-1}(\hD \setminus \nu(\Gamma))$ is diffeomorphic to a 4-manifold with $d$ $1$-handles attached. It remains to attach $\pi_0^{-1}(\nu(\Gamma))$ to $\pi_0^{-1}(\hD \setminus \nu(\Gamma))$. For this, $\nu(\Gamma)$ is decomposed into multiple parts (see \cite[Figure 7]{sugawara2023handle}). Mainly, let $V_i\subset U_i$ be a small disk around $y_i$ such that $\pi_0^{-1}(v) \simeq \mathbb{D}\setminus \{d-(m_i-1)\text{ points}\}$, for all $v \in V_i$ and $m_i$ the multiplicity of the singularity of $\sC$ contained in $\mathcal{L}_{y_i}$. The complement of $V_i$ in $U_i$ is decomposed into two parts: $A_i$ and $B_i=\overline{U_i\setminus(A_i \cup V_i)}$.
 
Attaching $\pi_0^{-1}(A_i \cup V_i)$ to $\pi_0^{-1}(\hD \setminus \nu(\Gamma))$ does not change the diffeomorphism type by Lemma \ref{LemSug}. However, attaching $\pi_0^{-1}(B_i)$  to what precedes is equivalent to attach $(m_i-1)$ 2-handles. To prove this, one has to consider stable map on the boundary of $(\hD\times \hD_y)\setminus \nu(\sC)$ and use local coordinates description. Finally, attaching the rest $\pi_0^{-1}(\nu(\Gamma)\setminus \cup_i U_i)$ does not change the diffeomorphism type, again by Lemma~\ref{LemSug}. Hence, $\C\P^2 \setminus \sC^+$ has a handle decomposition with handles of indices at most 2.


\subsection{Alexander polynomials} \label{AP}
We recall first the definition of classical Alexander polynomials, of links of singularities, and of links at infinity.

If $K$ is an oriented $\ell$-component link in $S^3$, then by Alexander duality $H_1(S^3\setminus K) \cong \Z^\ell$, with a preferred basis given by the meridians of the components of $K$. 
Using this basis, we construct a morphism $\overline\phi_K\co H_1(S^3\setminus K) \to \Z$, sending each meridian to $1$. 
By pre-composing with the Abelianisation map\footnote{We omit basepoints here. This will neither play a role nor create any confusion.} $\pi_1(S^3\setminus K) \to H_1(S^3\setminus K)$, we get a homomorphism $\phi_K \co \pi_1(S^3\setminus K) \to \Z$, and a corresponding infinite cyclic cover $Y \to S^3\setminus K$. 
The group of deck transformations of $Y \to S^3 \setminus K$ is $\Z$, which endows $H_*(Y;\Z)$ with the structure of a $\Z[t,t\inv]$-module. 
As a $\Z[t,t\inv]$-module, $H_1(Y;\Z)$ decomposes as a sum of cyclic modules $\bigoplus_{i}\frac{\Z[t,t^{-1}]}{(\lambda_i(t))}$ (see, for instance,~\cite[Corollary~8.C.5]{Rolfsen}).
The Alexander polynomial of $K$, denoted with $\Delta_K$, is then $\Delta_K(t) := \prod_i \lambda_i(t)$, and it is a Laurent polynomial, defined up to units in $\Z[t,t\inv]$, i.e. up to a factor fo the form $\pm t^k$ for some integer $k$.

The two crucial points of the definition above are that $K$ is of codimension $2$ in $S^3$, and that there is a preferred morphism from $H_1(S^3\setminus K) \to \Z$ (associated to an orientation of $K$). At least considering the first point alone, it makes sense to try to generalise the definition to (symplectic) curves in $\CP$, which are codimension-$2$ objects.

If $\sC \subset \CP$ is a curve with $\ell$ components whose degrees are $d_1, \dots, d_\ell$, however, $H_1(\CP\setminus \sC) = \Z^\ell/\langle(d_1, \dots, d_\ell)\rangle$. There is no preferred morphism to $\Z$: for instance, there is no homomorphism sending all meridians to $1$. Libgober's idea was to introduce an auxiliary line $L$ to take care of this issue.

Let now $L$ be an auxiliary line and denote $\sC^+ = \sC \cup L$. We now have that $H_1(\CP \setminus \sC^+) = \Z^{\ell+1}/\langle(1,d_1, \dots, d_\ell)\rangle$, and, since $L$ is a \emph{distinguished} component of $\sC^+$, there is a preferred morphism $\phi_{\sC,L}\co \pi_1(\CP\setminus \sC^+) \to \Z$: we send the meridian of each component of $\sC$ to $1$, and the meridian of $L$ to $-(d_1+\dots+d_\ell)$. As above, we have an associated infinite cyclic cover $X \to \CP\setminus \sC^+$, and $H_1(X;\Q)$ is endowed with a structure of $\Q[t,t\inv]$-module.

\begin{rk}

Equivalently, one can think of $X$ as being the infinite cyclic cover of $(\CP \setminus L) \setminus \sC$, where we view $\sC$ as an affine curve in $\CP\setminus L = \C^2$. In this case, Alexander duality tells us that $H_1(\C^2\setminus \sC)$ is indeed $\Z^\ell$, generated by the meridians of $\sC$, so we are back to the setup of classical links and we can (almost) run the same argument.

We prefer the first setup because compact symplectic curves in $\CP$ are a lot tamer than symplectic curves in $\C^2$---the latter being closer to analytic submanifolds of $\C^2$ than to algebraic curves in $\C^2$.
\end{rk}

Either way, we call the order of $H_1(X;\Q)$ the \emph{Alexander polynomial} of $\sC$ with respect to $L$, $\Delta_{\sC,L}$. As for $\Delta_K$ this is a Laurent polynomial, which is now defined up to units in $\Q[t,t\inv]$, that is, up to a factor of the form $qt^k$ for some rational number $q$ and integer $k$ (see \cite[Section~2]{ACT-survey}). Recall that the order of a $\Q[t,t\inv]$-module $M$ is defined as follows:

\begin{equation}\label{eq:order}
\ord_{\Q[t,t\inv]}M = \left\{
    \begin{array}{ll}
        0 & \mbox{if } M \mbox{ has a free summand} \\
        1 & \mbox{if } M \mbox{ is 0}\\
\prod_i \lambda_i(t) & \mbox{if }  M \cong \bigoplus_{i}\frac{\Q[t,t^{-1}]}{(\lambda_i(t))}.
    \end{array}
\right.
\end{equation}

\begin{rk}
Here we chose to use rational coefficients, but Alexander polynomials can be defined over any \emph{field}. Contrarily to the case of classical knots, working over $\Z$ is more delicate, and in fact not always possible. Indeed, there are examples of complex curves for which the $H_1(X;\Z)$ does not decompose as sum of cyclic modules, so the definition above cannot extend over $\Z$ in full generality. For instance, see~\cite[Example 2.5(2)]{ACT-survey}, which shows that when $\sC$ is the Steiner quartic (the unique quartic with three cusps, which is the dual to a nodal cubic) the module $H_1(X;\Z)$ is not cyclic.
\end{rk}

The definition can easily be extended to the case when $\sC$ is a symplectic curve: choose an almost-complex structure $J$ that is $\omega_\FS$-compatible and for which $\sC$ is $J$-holomorphic, and add a $J$-holomorphic line $L$.
Then run the same argument.
In fact, the definition can also be extended to collections of singular submanifolds of $\CP$ (e.g. PL-immersed surfaces in $\CP$), but in this case the choice of the auxiliary ``line'' is a lot more arbitrary.

\subsection{Oka polynomials}

With the Alexander polynomial for complex curves in mind, we now go back to the classical case, that of links in $S^3$, and we define the variant $\hat\Delta$ of the Alexander polynomial appearing in Theorem~\ref{t:main}. In fact, we will define Oka polynomials, which are a suitably twisted version of Alexander polynomials, of which $\hat\Delta$ is a special case.

As already mentioned, the classical Alexander polynomial of a link $K$ comes from the choice of a preferred morphism $\phi_K \co \pi_1(S^3 \setminus K) \to \Z$ which sends all meridians of $K$ to $1$.
When $K$ is the link of a singularity of $\sC^+ = \sC \cup L$ lying on $L$, however, $K$ has a unique (unknotted) component that corresponds to the component $L$ of $\sC^+$.
As we have seen above, this component behaves differently from the others, in that its meridian is not sent to $1 \in \Z$ by the morphism $\pi_1(\CP\setminus \sC^+) \to \Z$, but rather to $-d$ (where $d$ is the degree of $\sC$).

More generally, we can consider an arbitrary morphism $\phi\co \pi_1(X) \to \Z$, where $X$ is either $S^3\setminus K$ or $\CP\setminus \sC^+$. We will only consider morphisms that sends each meridian (of $K$ or of $\sC^+$) to a non-zero integer. For a link $K$ with a marked component and an integer $d$ (which is left implicit in the notation) we denote with $\hat\phi_K \co \pi_1(S^3 \setminus K) \to \Z$ the morphism that sends the marked meridian of $K$ to $-d$ and every other meridian to $1$.

We consider Alexander polynomials coming from the infinite cyclic covers associated to these homorphisms $\phi$.

\begin{defi}
Let $K$ be a link in $S^3$ and $\phi\co \pi_1(S^3 \setminus K) \to \Z$ be a homomorphism. We define the \emph{Oka polynomial} $\Delta^\phi_K \in \Z[t,t\inv]$ associated to $(K,\phi)$ as the order of torsion of $H_1(X_\phi)$ as a $\Z[t,t\inv]$-module, where $X_\phi \to S^3\setminus K$ is the infinite cyclic cover of $S^3\setminus K$ associated to $\phi$.

Analogously, let $\sD$ be a reducible symplectic curve in $\CP$ and $\phi\co \pi_1(\CP \setminus \sD) \to \Z$ be a homomorphism. We define the \emph{Oka polynomial} $\Delta^\phi_\sD \in \Q[t,t\inv]$ associated to $(\sD, \phi)$ as the order of torsion of $H_1(X_\phi)$ as a $\Q[t,t\inv]$-module, where $X_\phi \to \CP\setminus \sD$ is the infinite cyclic cover of $\CP\setminus \sD$ associated to $\phi$.
\end{defi} 

\begin{rk}
We do not assume that $\phi$ be surjective. We do, however, keep assuming that $\phi$ does not send any meridian to $0$. If the image of $\phi$ is generated by $a > 0$, then $X_\phi$ comprises $a$ connected components.
\end{rk}

As a special case, we have the following definition.

\begin{defi}
Let $K$ be a link in $S^3$ with a marked component, and $d$ be an integer. We define
\[
\hat\Delta_K  := \Delta^{\hat\phi_K}_K\in \Z[t,t\inv].
\]
(In order to keep the notation more readable, we will leave the integer $d$ and the marked component of the knot implicit.)
\end{defi}

We will now focus on the case of knots.
Just like $\Delta_K$, $\Delta^\phi_K$ (and hence $\hat\Delta_K$, too), can be computed from the \emph{multi-variable Alexander polynomial} of $K$ (see~\cite[Chapter~7]{Kawauchi}). 
We use the same notation for the multi-variable Alexander polynomial, except that now, for a link $K$ comprising $n+1$ components, $\Delta_K$ is an object of $\Z[t_0^{\pm1}, t_1^{\pm1}, \dots, t_n^{\pm1}]$. If $K$ has a marked component, it will always be the one corresponding to the variable $t_0$.

\begin{prop}
Fix a non-negative integer $n$. 
Let $K$ be a link with $n+1$ components and $\phi$ be a homomorphism as above, with $\im \phi = a\Z$ with $a>0$.
\begin{enumerate}
\item If $n+1 = 1$, $\Delta^\phi_K(t) = \Delta_K(t^a)$.
\item If $n+1 > 1$, $\Delta^\phi_K(t) = (1-t^a)\Delta_K(t^{\phi(\mu_0)}, \dots, t^{\phi(\mu_n)})$.
\end{enumerate}
As a special case, if $K$ has its 0$^{\text{th}}$ component marked,
\[
\hat\Delta_K(t) = (1-t)\Delta_K(t^{-d},t,\dots,t).
\]
\end{prop}

\begin{proof}
We denote by $\phi^1$ (resp. $\phi^a$) the morphism sending the meridian to 1 (resp. to a). Let us consider the following commutative diagram:
$$\begin{tikzcd}
 \pi_1(X)   \arrow[r, "id"] \arrow[d,"\phi^1"]
 & \pi_1(X)  \arrow[d, "\phi^a"] \\
 \mathbb{Z} \arrow[r, "\times a"]
 & \mathbb{Z}
 \end{tikzcd} $$
We denote by $M_1$ (resp. $M_2$) the Fox matrix associated to $\phi^1$ (resp. $\phi^a$) with entries in $\mathbb{Z}[u^{\pm 1}]$ (resp. $\mathbb{Z}[t^{\pm 1}]$).  $(1)$ is then straightforward, noticing that $M_2$ is the same as $M_1$ replacing $u$ by $t^a$.\\
For (2), this follows directly from~\cite[Proposition~7.3.10]{Kawauchi}. The second half of the statement follows from the observation that $\im\hat\phi_K = \Z$, so $a = 1$.
\end{proof}

We now give three sample computations of $\hat\Delta$ in some simple cases.

\begin{example}\label{ex:deltahat}
Fix a positive integer $d>1$ throughout. 
Now, recall from~\cite[Theorem~12.1]{EisenbudNeumann}, applied to the splice diagram of the link $T(n+1,n+1)$ (the union of $d$ fibres of the Hopf fibration, oriented so that every two components have linking number $+1$) that can be found in~\cite[Page~448]{Neumann-linksatinfty}, that
\[
\Delta_{T(n+1,n+1)}(t_0, \dots, t_n) = (t_0\cdots t_n - 1)^{n-1}.
\]
\begin{itemize}
\item Let us consider the case $n = 1$. 
This is the Hopf link, $T(2,2)$, with one marked component, which corresponds to a transverse intersection of $L$ and $\sC$ (at a non-singular point of $\sC$).
Then
\[
\hat\Delta_{T(2,2)} = (1-t)\Delta(t^{-d},t) = 1-t.
\]

\item If $1 < n < d$, we have:
\[
\hat\Delta_{T(n+1,n+1)} = (1-t)\Delta(t^{-d},t,\dots,t) = (1-t)(t^{n-d}-1)^{n-1},
\]
This corresponds to the case where the curve $\sC$ has an ordinary $n$-fold point at infinity (i.e. $n$ non-singular branches meeting transversely), and each branch of $\sC$ is transverse to the line at infinity. This happens, for instance, for line arrangements with points on the line at infinity.

\item Let us consider the case $n=d$. A topological realisation of this case is discussed in Remark~\ref{r:vanishingAlex}.
Then
\[
\hat\Delta_{T(d+1,d+1)} = (1-t)\Delta(t^{-d},t,\dots,t) = 0.
\]

\item Finally, let us consider the case of the Alexander polynomial at infinity with respect to a generic line: in this case, the link at infinity is $T(d,d)$ and we have:
\[
\Delta_{T(d,d)}(t) = (1-t)(t^d-1)^{d-2}.
\]
\end{itemize}
\end{example}

\begin{rk}
For complex or symplectic curves, there is a ``preferred'' choice of line: namely, a generic line (say, one that intersects $\sC$ transversely) will always give the same Alexander polynomial.

In the case of complex curves, this is because being tangent to $\sC$ is a (complex) codimension-$1$ phenomenon in the dual projective plane $(\CP)^\vee$, and in particular the set of transverse lines is connected, so that the homotopy type of the complement does not depend on the choice.

In the case of symplectic curves, this is a consequence of, for instance,~\cite[Proposition~5.1]{golla_starkston_2022}. Indeed, that statement guarantees that a symplectic $L$ making $\sC \cup L$ symplectic, and meeting $\sC$ sufficiently generically, always exists and is unique up to isotopy. (So that, in fact, the isotopy class of $\sC \cup L$ is also uniquely determined.)
\end{rk}

As mentioned above, if $\sC^+$ is a symplectic curve, then its singular points are isolated, and they are \emph{topologically equivalent} to singularities of complex curves~\cite{Mcduff, MicallefWhite}. That is to say, for each singularity $p$ of $\sC^+$ there exist a complex curve $D \subset B^4 \subset \C^2$, an open neighbourhood $U$ of $p$, and a homeomorphism of triples $(U,\sC^+,p) \simeq (B^4,D,0)$. By work of Milnor~\cite{Milnor-hypersurfaces}, $(U,\sC^+,p)$ is homeomorphic to the cone over a link in $S^3$. This link, which is just the intersection $\sC^+ \cap \del U$, is called the \emph{link} of the singularity of $\sC^+$ at $p$. The links that arise in this fashion are often called \emph{algebraic}, and they form a well-understood class of links. (For the initiated reader, they are ``sufficiently positive'' cables of the unknot~\cite{Brauner, Kaehler}.)

It is well-known that Alexander polynomials of algebraic links are products of cyclotomic polynomials (see, for instance, \cite[Chapter~11]{Lickorish} or \cite[Exercise~7.D.8]{Rolfsen}). 
For the polynomials $\hat\Delta_K$ of marked algebraic links defined above, by~\cite[Theorems~9.4 and~12.1]{EisenbudNeumann} they are either $0$ or a product of cyclotomic polynomials.
Thus, Corollary~\ref{c:cyclotomic} is an easy consequence of Theorems~\ref{t:main1} and ~\ref{t:main}. Remark~\ref{r:cyclautomatic} follows from the computations in Example~\ref{ex:deltahat} and from (the proof of) Proposition~\ref{l-1}.

Finally, given a curve $\sC$ and a line $L$, we can define $K_{\infty}$ the \emph{link at infinity} of $\sC$ with respect to $L$ as the intersection $\sC \cap \del \nu(L) \subset \del \nu(L)$, where $\nu(L)$ is a sufficiently small tubular neighbourhood of $L$. The isotopy type of $\sC \cap \del \nu(L)$ will not depend on $\nu(L)$, as long as $\nu(L)$ is sufficiently small. Alternatively, one can think of the link at infinity in the following way: remove $L$ from $\CP$, thus obtaining $\C^2$ with an affine curve $\sC\setminus L$, and consider the intersection of the boundary of a very large ball in $\C^2$ with $\sC$.
\begin{rk}
As we have seen in Example~\ref{ex:deltahat}, there are some cases in which $\hat\Delta_{K_i}$ does not vanish. Is it possible to characterise precisely for which (marked) links of singularities $K$ the twisted Alexander polynomial $\hat\Delta_K$ vanishes? Is there a relationship between the vanishing of one of the $\hat\Delta_{K_i}$ and the vanishing of $\Delta_{K_\infty}$?
\end{rk}

\subsection{Algebraic and computational aspects of Alexander polynomials}

In principle, the Alexander polynomial (of a knot $K$ or of a curve $\sC$) can be algorithmically computed via Fox calculus starting from a presentation of $\pi_1(S^3\setminus K)$ or $\pi_1(\CP\setminus \sC^+)$. Let us recall briefly the idea.

Suppose that $\phi: \pi_1(Z) \to \mathbb{Z}$ is a surjective morphism (here $Z$ is $S^3\setminus K$ or $\CP\setminus \sC^+$). Assume that $\pi_1(Z)$ has the following presentation
 \[
 \pi_1(Z)=\langle x_1,\dots,x_n \mid   r_1,\dots,r_m \rangle.
 \]

 Thus we have a surjective morphism $\psi: F_n \to \pi_1(Z)$, where $F_n$ is a free group of rank $n$ generated by  $x_1,\dots,x_n$. Consider the group ring of $F_n$ with $\Q$-coefficient $\Q[F_n]$. The Fox differential $\frac{\del}{\del x_i}:\Q[F_n] \to \Q[F_n]$ is the unique $\Q$-linear morphism satisfying the following properties:
\begin{align*}
 \frac{\del x_i}{\del x_j} &= \delta_{ij},\\
 \frac{\del uv}{\del x_j} &= \frac{\del u}{\del x_j}+u\frac{\del v}{\del x_j}, \text{ for } u,v \in \Q[F_n].
\end{align*}
 The composition $\phi \circ \psi: F_n \to \mathbb{Z}$ gives a ring homomorphism $\theta: \Q[F_n] \to \Q[t^{\pm1}]$ and the associated \emph{Fox matrix} is an $n\times m$ matrix whose $(i,j)$-entry is given by
\[
\theta\left(\frac{\del r_i}{\del x_j}\right) \in \Q[t^{\pm1}].
\]
The Alexander polynomial of $Z$ relative to $\phi$ is the greatest common divisor of the minors of order $n-1$ of the Fox matrix if $n \leq m$ and $0$ otherwise.

Similar ideas can be used to compute the multi-variable Alexander polynomial.
See, for instance,~\cite[Theorem~7.1.5]{Kawauchi}.

More generally, using Fox calculus as above, one can define the Alexander polynomial $\Delta^{\phi}_{G}$ associated to any group $G$ with a surjective morphism $\phi\co G \to \Z$. With this notation, for instance, we have $\Delta_K = \Delta_{\pi_1(S^3\setminus K)}^ {\phi}$, where $\phi$ sends each meridian of $K$ to $1$.

We recall here an easy lemma for later use.

\begin{lem}[{\cite[Proposition~2.1]{Libcyclic}}]\label{Div}
Let $G_1$ (resp. $G_2$) be a fundamental group associated to the epimorphism $\phi_1: G_1 \xrightarrow[]{} \Z$ (resp. $\phi_2: G_2 \xrightarrow{} \Z$). Suppose we have a surjective map $\phi: G_1 \xrightarrow[]{} G_2$ as in the following diagram:
\[
\xymatrix @!0 @C=4pc @R=3pc { 
    G_1 \ar@{->>}[rr]^{\phi} \ar[rd]_{\phi_1} && G_2 \ar[ld]^{\phi_2} \\ & \Z }.
\]
Then $\Delta_{G_2}^{\phi_2}(t)\mid \Delta_{G_1}^{\phi_1}(t).$
\end{lem}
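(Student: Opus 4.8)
The plan is to reinterpret both Alexander polynomials module-theoretically and to upgrade the group-level surjection $\phi$ to a surjection of Alexander modules, after which the divisibility becomes a purely algebraic statement over the PID $\Q[t,t\inv]$. Concretely, for a group $G$ with an epimorphism $\psi\co G \to \Z$, I would use the description of Section~\ref{AP}: $\Delta_{G,\psi}$ is the order of the \emph{Alexander module} $A_\psi(G) := (\ker\psi)_{\mathrm{ab}}\otimes_\Z \Q$, that is, $H_1$ with $\Q$-coefficients of the infinite cyclic cover determined by $\psi$, viewed as a $\Q[t,t\inv]$-module where $t$ acts by conjugation by any $\mu \in G$ with $\psi(\mu)=1$. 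Since $\Q[t,t\inv]$ is a PID, $\ord_{\Q[t,t\inv]}$ is well defined as in~\eqref{eq:order}.

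The main step is to show that $\phi$ induces a \emph{surjection} of Alexander modules $A_{\phi_1}(G_1) \twoheadrightarrow A_{\phi_2}(G_2)$. Because $\phi_2\circ\phi = \phi_1$ and $\phi$ is onto, $\phi$ restricts to a surjection $\ker\phi_1 \twoheadrightarrow \ker\phi_2$: indeed $\phi(\ker\phi_1)\subseteq\ker\phi_2$ by commutativity of the triangle, and any $h\in\ker\phi_2$ admits a lift $g\in G_1$ with $\phi(g)=h$, which automatically lies in $\ker\phi_1$ since $\phi_1(g)=\phi_2(h)=0$. Abelianising and tensoring with $\Q$ preserves surjectivity. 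To check $\Q[t,t\inv]$-linearity, I would fix $\mu_1\in G_1$ with $\phi_1(\mu_1)=1$ and set $\mu_2:=\phi(\mu_1)$, so that $\phi_2(\mu_2)=1$; then the identity $\phi(\mu_1 g\mu_1\inv)=\mu_2\,\phi(g)\,\mu_2\inv$ shows that the induced map intertwines the two conjugation $t$-actions.

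The proof then reduces to the algebraic fact that a surjection $M\twoheadrightarrow N$ of finitely generated $\Q[t,t\inv]$-modules forces $\ord N \mid \ord M$. If $M$ has a free summand then $\ord M = 0$ and the divisibility is automatic, as $\ord N\mid 0$; otherwise $M$, and hence its quotient $N$, is torsion, and I would argue one prime at a time. Since $\Q[t,t\inv]$ is a PID, for each irreducible $p$ the localisation $M_{(p)}\twoheadrightarrow N_{(p)}$ is a surjection of finite-length modules over the DVR $\Q[t,t\inv]_{(p)}$, so $\ell_p(N)\le\ell_p(M)$; as the $p$-adic valuation of $\ord T$ equals $\ell_p(T)$ for torsion $T$, we conclude $\ord N\mid\ord M$. (Equivalently, one may invoke the monotonicity $\mathrm{Fitt}_0(M)\subseteq\mathrm{Fitt}_0(N)$ of zeroth Fitting ideals under surjections.)

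I expect the only genuinely delicate point to be the bookkeeping in the middle step: verifying both that the map on kernels is onto and that it is equivariant for the conjugation $t$-actions associated to the two distinct meridians $\mu_1$ and $\mu_2$. Once the module-level surjection is in place, the concluding order estimate is routine.
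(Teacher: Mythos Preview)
Your proposal is correct, but it takes a genuinely different route from the paper's own proof. The paper argues directly via Fox calculus: using the surjection $\phi$, one chooses compatible presentations
\[
G_1 = \langle x_1,\dots,x_n \mid r_1,\dots,r_m\rangle,\qquad G_2 = \langle x_1,\dots,x_n \mid r_1,\dots,r_{m+m'}\rangle,
\]
so that the Fox matrix for $(G_2,\phi_2)$ is the Fox matrix for $(G_1,\phi_1)$ with $m'$ extra columns adjoined; every $(n-1)\times(n-1)$ minor of the smaller matrix is then also a minor of the larger, and the gcd-of-minors description of $\Delta$ gives the divisibility immediately. Your argument is instead module-theoretic: you upgrade $\phi$ to a $\Q[t,t\inv]$-linear surjection of Alexander modules and conclude by the order inequality for quotients over a PID.

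Both proofs are short. The paper's has the advantage of plugging straight into the Fox-calculus setup developed just before the lemma; yours is more intrinsic (no presentation needs to be chosen) and makes the underlying reason transparent. It is worth noting that your parenthetical alternative via the inclusion $\mathrm{Fitt}_0(M)\subseteq\mathrm{Fitt}_0(N)$ is, unwound, essentially the abstract form of the paper's minor argument, since the Fox matrix is (up to a standard normalisation) a presentation matrix for the Alexander module. One small point you leave implicit is that the Alexander modules are finitely generated over $\Q[t,t\inv]$; this is standard (e.g.\ from any finite presentation of $G_i$), but is needed for the PID structure theory you invoke.
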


\begin{proof}

Since there is a surjection $\phi\colon G_1 \to G_2$, we can choose presentations for $G_1$ and $G_2$ of the form
\[
G_1 = \langle x_1, \dots, x_n \mid r_1, \dots, r_m\rangle, \quad G_2 = \langle x_1, \dots, x_n \mid r_1, \dots, r_{m+m'}\rangle.
\]
Since $\phi_2 \circ \phi = \phi_1$, the Fox matrix $M_2$ for $(G_2, \phi_2)$ is obtained by adding $m'$ columns to the Fox matrix $M_1$ of $(G_1,\phi_1)$, and therefore all $(n-1)$-minors of $M_1$ are also $(n-1)$-minors of $M_2$, hence the require divisibility follows.  

\end{proof}


\section{Alexander polynomials divisibility for symplectic curves}\label{S3}
In this section, we will prove the two parts of the divisibility theorem for symplectic curves. First, let us fix the context.

\subsection{Pencils and singular fibrations}
In this section we want to introduce braid monodromies of symplectic (i.e. $J$-holomorphic) curves, as defined by Kharlamov and Kulikov~\cite[Section~4]{KharlamovKulikov}.

In order to define it, we start by recalling that, similarly to the case of algebraic curves, one can define a singular fibration over $\C\P^1$ associated to a plane symplectic curve as follows. This follows from Gromov's foundational work on $J$-holomorphic curves~\cite{Gromov}.

Let $\sC$ be a symplectic curve in $(\CP, \omega_{\rm FS})$ and $J$ an $\omega_{\rm FS}$-compatible almost-complex structure on $\CP$ such that $\sC$ is $J$-holomorphic. Let $L$ be a $J$-holomorphic line and let $\sC^+=\sC \cup L$.
We are going to define a $J$-linear projection from a point $p\in L$ to $\CPone$. To do this, recall that for each $q\neq p$ in $\CP$ there exists a unique $J$-holomorphic line $L_{p,q}$ passing through $p$ and $q$~\cite{Gromov}. Choose an auxiliary $J$-holomorphic line\footnote{This is not necessary: one could also work with $\P(T_p\CP)$ instead, and it would be completely equivalent.}  $L'$ which we identify with $\CPone$, and define $\pi \colon \CP \setminus \{p\} \to L'\cong \CPone$ by sending $q$ to $L_{p,q} \cap L'$.

From now on, we will assume that $p \not\in \sC$. By positivity of intersections~\cite{McDuff-positivity}, the intersection between every $J$-holomorphic line and $\sC$ comprises $d$ points, counted with multiplicity. In fact, pre-composing the restriction $\pi|_{\sC}$ with the normalisation map $\tilde\sC \to \sC$ gives a branched cover of degree $d$, ramifying at points that correspond to points where either $\sC$ has a singular branch or $L_{p,q}$ is tangent to a non-singular branch of $\sC$. 

Up to changing the point $p$, we can assume that all singular points lie in distinct fibers of $\pi$.

We can now define the braid monodromy morphism exactly as in the complex case, by restricting to the preimage of the set of regular values of the projection $\pi$ from $p$. The total space of this restriction is a locally-trivial fibration with fibre $\C$ with $d$ points removed, and there is an associated monodromy map with values in the braid group $B_d$.

Furthermore, the braid monodromy of a curve determines the curve up to symplectic isotopy~\cite{KharlamovKulikov}. Contrarily to the complex case, however, to every braid monodromy one can associate a symplectic curve. 
Equivalently, one can think of suitable factorisations of the full twist (a braid whose closure is the generic link at infinity). That not all such factorisations correspond to complex curves was observed by Moishezon, who distinguished between \emph{geometric} and \emph{analytic} factorisations~\cite{Moi}.

\subsection{Divisibility at infinity}

We now prove the divisibility relation at infinity for Oka polynomials of symplectic curves, which readily implies (i) in Theorem~\ref{t:main}.

\begin{thm}\label{t:infty}
Let $\sC \subset \CP$ be a symplectic curve and $\sC^+ := \sC \cup L$ be $\sC$ with a line at infinity added and $K_\infty$ be the corresponding link at infinity. Let $\phi\co \pi_1(\CP\setminus\sC^+)\to \Z$ be a surjective homomorphism and denote with $\phi$ also the induced homomorphism $\pi_1(S^3\setminus K_\infty) \to \Z$. Then the Oka polynomial $\Delta^\phi_{\sC, L}(t)$ of $\sC$ with respect to $L$ and $\phi$ divides the Oka polynomial at infinity $\Delta^\phi_{K_\infty}(t)$.
\end{thm}
Note that the same proof holds for the Alexander polynomial $\Delta_{\sC,L}(t)$.\\
In what follows, we denote with $D_m$ a closed disc with $m$ small open disks removed from its interior. With an abuse of notation, we denote with $\del D_m$ just the outer boundary of $D_m$, which is a single copy of $S^1$.

\begin{proof}
Choose an almost-complex structure $J$ on $\CP$ for which the curve $\sC^+$ is $J$-holomorphic.
Choose a small tubular neighbourhood $\nu(L)$ of $L$ in $\CP$ such that the intersection of $\sC$ and $\del \nu(L)$ is transverse.
Call $Y_\infty := \del \nu(L) \setminus \nu{(\sC)}$.
Note that $Y_\infty$ is the complement of a small regular neighbourhood of $K_\infty$ in $S^3$.
Using the notation introduced in  subsection~\ref{Handle}, $Y_\infty=(\del \hD \times \hD_y) \setminus \nu(\sC) \cup (\hD \times \del \hD_y)$.
We will prove that $\pi_1(Y_\infty)$ surjects onto $\pi_1(\CP \setminus \sC^+)$.

For that, we will find a suitable handle decomposition of $\C\P^2 \setminus \nu(\sC^+)$ relative to $Y_\infty$. In order to do so, 
following \cite{sugawara2023handle}, we will show that $\CP \setminus \sC^+$ has a handle decomposition relative to $Y_\infty$ with handles of indices greater than or equal to 2.

Let $\pi_0 \colon (\hD \times \hD_y) \setminus \nu(\sC) \to \hD$ denote the restriction of the projection map to (the compactification of) the complement of $\sC^+$.
Let $\nu(\Gamma')= \bigcup_{i=0}^{s} U_i \cup \bigcup_{i=1}^{s} \nu(S_i)$, with $\nu(S_i)$ a small tubular neighbourhood of $S_i$ (see Figure~\ref{1l}). Notice that what differs from $\nu(\Gamma)$ defined in Section ~\ref{S2}, is the absence of $S_0$ the path connecting the basepoint to $\partial\mathbb{D}$.

We start by decomposing $\pi_0^{-1}(\nu(S_i)\cup U_i)$ for each $i = 1,\dots,s$ (see Figure \ref{2l}). Let $V_i\subset U_i$ be a small disk around $y_i$ such that $\pi_0^{-1}(v) \cong D_{d-(m_i-1)}$, for all $v \in V_i$.
Recall that on $L_{y_i}$ there is a unique point that does not intersect $\sC$ transversely, and $m_i$ is the multiplicity of intersection of $L_{y_i}$ and $\sC$ precisely at that point.
As in Section~\ref{S2}, we decompose the complement of $V_i$ in $U_i$ into two parts: $A_i$ and $B_i=\overline{U_i\setminus(A_i \cup V_i)}$.
\begin{figure}
\labellist
\pinlabel $I_0$ at 207 67 
\pinlabel $I_2$ at 169 67
\pinlabel $I_4$ at 59 67
\pinlabel {$I_5^+ \equiv I_1^+$} at 50 93
\pinlabel $I_5$ at 10 93
\pinlabel {$I_3^+$} at 113 76
\pinlabel $I_1$ at 88 88
\pinlabel $A_i$ at 134 114
\pinlabel $B_i$ at 84 67
\pinlabel $V_i$ at 134 67
\pinlabel $S_i$ at 30 67
\endlabellist
\includegraphics[scale=1.3]{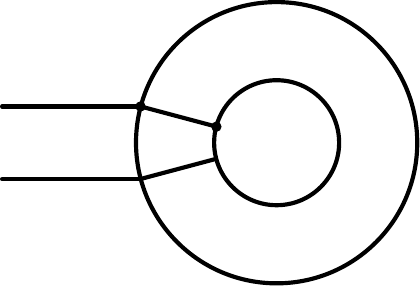}
 \caption{Attaching areas of $U_i$, $ i=1,\dots,s$.}\label{3l}
 \end{figure}

Using the decomposition of $\Gamma'$, we attach parts one by one. First, let us study the attachment of $\Tilde{A_i}=\pi_0^{-1}(A_i)$ to $Y_\infty$.
As noted by Sugawara~\cite[Section~3]{sugawara2023handle}, $\Tilde{A_i} \cong I_0\times I_1 \times D_d$, where $I_0$ and $I_1$ are intervals. We will denote by $I_1^-$ and $I_1^+$ respectively the interior and exterior boundary of $I_1$ such that $\del I_1 = \{I_1^-,I_1^+\}$ (see Figure~\ref{3l}). In the trivialisation above, the attaching area of $\Tilde{A_i}$ to $Y_\infty$ to $I_0 \times\{ I_1^+\} \times D_d \cup I_0\times I_1\times \del D_d  =X \cup Y$, and here $X$ and $Y$ intersect along $I_0\times\{ I_1^+\}\times \del D_d \subset Y$. One can notice that $Y$ is a collar neighbourhood of one component of $X$. Thus by Lemma~\ref{LemSug}, attaching $\Tilde{A_i}$ to $Y_\infty$ does not affect the diffeomorphism type.

Moreover, $\Tilde{V}_i=\pi_0^{-1}(V_i) \cong I_2\times I_3\times D_{d-m_i+1}$. In this trivialisation, the attaching area of $\Tilde{V_i}$ to $Y_\infty \cup \Tilde{A_i}$ is $I_2 \times \{I_3^+\} \times D_{d-m_i+1} \cup I_2 \times I_3 \times \del D_{d-m_i+1} $, and the two parts of this attaching area meet along $I_2 \times \{I_3^+\} \times \del D_{d-m_i+1}$ (see Figure \ref{3l}). Again by Lemma~\ref{LemSug}, this attachment does not affect the topology.
From~\cite[Theorem~3.4]{sugawara2023handle}, attaching $\pi\inv(B_i)$ corresponds to attaching $m_i-1$ 2-handles, and furthermore~\cite[Remark~3.7]{sugawara2023handle} asserts that we can view these 2-handle as being attached to the boundary at infinity.

Now one needs to attach $\widetilde{\nu(S_i)}=\pi_0^{-1}(\nu(S_i))=I_4 \times I_5 \times D_d$ to $Y_\infty \cup \Tilde{A_i} \cup \Tilde{V_i} \cup \Tilde{B_i}$ along $\del I_4 \times I_5 \times D_d \cup I_4 \times \{I_5^+\} \times D_d \cup I_4\times I_5 \times \del D_{d}=X \cup Y \cup Z$, where the three parts of the attaching area are glued along $\del I_4 \times \{I_5^+\}\times \del D_{d} \subset X \cup Z$. One can notice that  each of $X$ and $Z$ is a collar neighbourhood with respect to one component of $Y$. Hence attaching $\widetilde{\nu(S_i)}$ does not change the diffeomorphism type (see Figure~\ref{3l}).

Finally,  one would attach $\Tilde{U_0}= \pi_0^{-1}(U_0) \cong U_0 \times D_d$, where we view $D_d$ as obtained from $\del D_d \times I$ by attaching $d$ 2-dimensional 1-handles $H_1, \dots, H_d$. One can notice that $\del\Tilde{U_0}$ is divided into two parts. First, $\del (U_0 \times \del D_d \times I) =\del U_0 \times \del D_d \times I \cup U_0 \times \del D_d \times \del I $, which are glued along $U_0 \times \del D_d \times \{I^+\}$.

The second part corresponds to $\del(U_0 \times H_i)=\del(U_0 \times I_i \times I_i')=\del U_0 \times I_i\times I_i'\cup U_0\times \del I_i \times I_i' \cup U_0 \times  I_i \times \del I_i' $  whose attaching area is $\del U_0 \times I_i\times I_i' \cup U_0 \times  I_i \times \del I_i'$ for $i \in \{1,\dots,d\}$ corresponding to a 3-handle. Hence attaching $\Tilde{U_0}$ to  $Y_\infty \cup \bigcup_i(\Tilde{A_i} \cup \Tilde{V_i} \cup \Tilde{B_i} \cup \nu(S_i))$ is equivalent to attaching $d$ $3$-handles.
This means that we are only attaching $2$- and $3$-handles to $Y_{\infty}$ to obtain $\CP\setminus \sC^+$, so that $\pi_1(Y_\infty)$ surjects onto $\pi_1(\CP\setminus \sC^+)$, which implies the required divisibility relation by Lemma~\ref{Div}.
\end{proof}

We can now prove Corollary~\ref{c:primepowerdegreeirreducible} from the introduction, asserting that an irreducible symplectic curve $\sC$ of prime power degree have Alexander polynomial $1$ with respect to a generic line at infinity.

\begin{proof}[Proof of Corollary~\ref{c:primepowerdegreeirreducible}]
Call $\Delta_{\sC}$ the Alexander polynomial of $\sC$ with respect to a generic line at infinity.
Let $\deg \sC = p^r$, where $p$ is a prime and $r$ is a positive integer.

By the divisibility at infinity, Theorem~\ref{t:main}(i), and the choice of a generic line at infinity, $\Delta_{\sC}$ divides $\Delta_{T(p^r,p^r)}$, which, by Example~\ref{ex:deltahat} is $(1-t)(t^{p^r}-1)^{p^r-2}$.
In particular, $\Delta_{\sC}$ has roots which are $p^r$-th roots of 1.

Since $\sC$ is irreducible, $\CP\setminus (\sC\cup L)$ is a homology $S^1$.
By~\cite[Theorem~1]{Sum}, its $n$-fold cyclic cover has positive first Betti number if and only if $\Delta_{\sC}$ has a root which is an $n\th$ root of $1$.


Finally, let us consider the case of $p^r$-fold covers.
First, note that there is no ramification over $L$ in the associated $p^r$-fold \emph{branched} cyclic cover of $\CP$, so that the $p^r$-fold cover of $\CP$ ramifies only over $\sC$.
$H_1(\CP\setminus \sC) = \Z/p^r\Z$, and by the Goldschmidt lemma~\cite[Lemma~4.1 and the following corollary]{HsiangSzczarba}, its universal abelian cover has finite $H_1$.
Therefore, the associated branched cover, too, has finite $H_1$, and hence its first Betti number vanishes.

Combining these three observations, we conclude the proof of the corollary.
\end{proof}


\subsection{Local divisibility}

We now prove the local divisibility theorem for Oka and Alexander polynomials of symplectic curves. This readily implies (ii) in Theorems~\ref{t:main1} and ~\ref{t:main}.\\

First, we will fix some notations. Let $\mathcal{C}$ be a symplectic curve of $\ell$ irreducible components denoted by $\sC_1,\cdots,\sC_{\ell}$. Let $L$ be the line at infinity, and $\mathcal{C}^+=\mathcal{C}\cup L$. For notational convenience, we may denote $L$ by $\sC_0$ as well.\\ We denote by $\sC^{\rm aff}=\bigcup_{i=1}^{\ell} \sC^{\rm aff}_i$ the affine part of $\sC$ (i.e. $\sC^{\rm aff}=\sC \setminus \sC_{0})$) and by $s=s^{\rm aff}+s^{\infty}$ the number of singular points of $\sC^+$ splitted into singularities on the affine part of $\sC$ and the ones on $\sC_0$. We will denote by $s_j=s_j^{\rm aff}+s^{\infty}_j$ the number of singularities of $\sC_j$.\\
Let $(\sC^+)^{\rm ns}$ be the non singular part of $\sC^+$, $\sC^{\rm ns}$ (resp. $\sC^{\rm aff,ns}$) the non singular part of $\sC$ (resp. $\sC^{\rm aff}$), and $\sC_j^{\rm ns}$ (resp. $\sC_j^{\rm aff,ns}$) the non singular part of the $j$-th component $\sC_j$ (resp. $\sC_j^{\rm aff}$). 

\begin{thm} \label{bound}
With the previous notations, we have that:
\begin{itemize}
\item the Oka polynomial of $\sC$ relative to $L$, $\Delta^{\phi}_{\sC,L}(t)$, divides
\[
\prod_i \hat \Delta_{K_i}( t)\cdot\prod_{j=0}^{\ell}(1-t^{a_j})^{-\chi(\sC_j^{ns})+2};
\]
 \item if all $\sC_j$ are rational then  $\Delta^{\phi}_{\sC,L}( t)$ divides $\prod_i \hat \Delta_{K_i}( t)\cdot\prod_{j=0}^{\ell}(1-t^{a_j})^{-\chi(\sC_j^{ns})+1}$;
\item the Alexander polynomial  $\Delta_{\sC,L}( t)$ divides $\prod_i \hat \Delta_{K_i}( t)\cdot(1-t)^{b_1(\mathcal{C}^+)}$. 
\end{itemize}
Here $\hat \Delta_{K_i}(t)$ is the (twisted) Alexander polynomial of the link of the singularity $K_i$ of $\sC^+$ at a singular point $p_i$.
\end{thm}
First let us recall that a handle decomposition of a neighbourhood $N_{\sC^+}$ of  $\sC^+$ with 0, 1 and 2-handles was described in Section~\ref{S2}.
We call $J=J_0 \cup J_1 \cup \dots  \cup J_{\ell} \subset (S^1 \times S^2)^{\#b}$ the attaching circles of the 2-handles of this decomposition, where the component $J_0$ is the one that corresponds to the line at infinity $L$. 

To prove this theorem we proceed in two steps.

\subsubsection{Knot-theoretic preliminaries}\label{step1 symp}

We now dive in a detailed analysis of the behaviour of Oka polynomials with respect to connected sums and knotifications, and compute (certain) Oka polynomials of Borromean knots.

\begin{prop}\label{p:connectedsum}
For $j = 1,2$, let $M_j$ be a closed, oriented $3$-manifold, $K_j \subset M_j$ be a link with null-homologous components, and $\phi_j\co H_1(M_j\setminus K_j) \to \Z$ be a homomorphism. Suppose $(M, K)$ is the connected sum of $(M_1,K_1)$ and $(M_2,K_2)$ performed along components of $K_1$ and $K_2$ whose meridians $\mu_1$ and $\mu_2$ satisfy $\phi_1(\mu_1) = \phi_2(\mu_2) = \pm a$, for some $a > 0$. Let $\phi$ be the unique induced homomorphism $H_1(M\setminus K) \to \Z$. Then
\[
\Delta^\phi_K(t) \mid (t^a-1) \cdot \Delta^{\phi_1}_{K_1}(t) \Delta^{\phi_2}_{K_2}(t).
\]
If, additionally, $a = 1$ or $\im \phi_2 = a\Z$, then:
\[
\Delta^\phi_K(t) = \Delta^{\phi_1}_{K_1}(t) \Delta^{\phi_2}_{K_2}(t).
\]
\end{prop}

\begin{proof}
Call $X_1$, $X_2$, and $X$ the infinite cyclic covers of $M_1\setminus K_1$, $M_2\setminus K_2$, and $M\setminus K$ associated to $\phi_1$, $\phi_2$, and $\phi$, respectively.

Note that $M\setminus K$ can be obtained by gluing $M_1\setminus K_1$ and $M_2\setminus K_2$ along an annulus (the sphere along which we take the connected sum, pierced twice by $K$).
Since $a \neq 0$, this annulus lifts to $a$ copies of its universal cover, which is a disjoint union of $a$ copies of a contractible space.
Call $A$ a thickening of this infinite cyclic cover.
As we have just seen, $H_1(A) = 0$ and $H_0(A) \cong \Z^{\oplus a}$. In the basis given by its connected components, the action of $\Z$ on $H_0(A)$ is by cyclic permutation of the generators.
In particular, as a $\Z[t,t\inv]$-module, $H_0(A) = \Z[t,t\inv]/(t^a-1)$, so its order is $(t^a-1)$.

We now apply the Mayer--Vietoris theorem to $(X, X_1, X_2, A)$.
We have
\[
0 = H_1(A) \to H_1(X_1) \oplus H_1(X_2) \to H_1(X) \to H_0(A),
\]
and, since $\phi$ is compatible with $\phi_1$ and $\phi_2$, all maps are maps of $\Z[t,t\inv]$-modules.
It follows that the order of $H_1(X)$ divides the product of the orders of $H_1(X_1)\oplus H_1(X_2)$ and of $H_0(A)$, which proves the first statement.

Let us now prove the second statement. Note that the assumption $a=1$ is a special case of the assumption $\im \phi_2 = a\Z$, so we only need to prove the latter.
If $\im \phi_2 = a\Z$, we have $H_0(A) \cong H_0(M_2\setminus K_2) \cong \Z[t,t^{-1}]/(t^a-1)$ and the inclusion of $A$ into $M_2\setminus K_2$ induces an isomorphism. The Mayer--Vietoris sequence now tells us that we have an isomorphism of $\Z[t,t\inv]$-modules $H_1(X) \cong H_1(X_1) \oplus H_1(X_2)$.
\end{proof}

\begin{prop}\label{p:knotification}
Let $M'$ be a closed, oriented $3$-manifold, $K'\subset M'$ a link with null-homologous components, and $\phi'\co H_1(M'\setminus K') \to \Z$ be a homomorphism. Choose two components of $K'$ whose meridians $\mu_1$, $\mu_2$ satisfy $\phi'(\mu_1) = \phi'(\mu_2) = \pm a$ for some $a>0$, and let $(M,K)$ be the knotification of $(M',K')$ along these two components. Finally, let $\phi\co H_1(M\setminus K) \to \Z$ be any homomorphism extending $\phi'$. Then:
\[
\Delta^\phi_K(t) \mid (t^a-1) \cdot \Delta^{\phi'}_{K'}(t).
\]
If, additionally, $a = 1$ or $\im \phi = \im \phi' = a\Z$, then:
\[
\Delta^\phi_K(t) = (t^a-1)\cdot \Delta^{\phi'}_{K'}(t).
\]
\end{prop}

\begin{proof}
As in the proof of Proposition~\ref{p:connectedsum}, $M\setminus K$ can be cut along an annulus to obtain $M'\setminus K'$. As above, we then call $A$, $X$, and $X'$ the infinite cyclic covers of a thickening of this annulus, of $M\setminus K$, and of $M'\setminus K'$, respectively. We apply the Mayer--Vietoris long exact sequence to the quadruple $(X, X', A, X'\cap A)$. Note that $B := X' \cap A$ is just two copies of $A$, and that $H_0(B) = H_0(A) \oplus H_0(A)$, with the inclusion $\iota\co B \hookrightarrow A$ inducing the map $\iota_*\co H_0(A) \oplus H_0(A) \to H_0(A)$ that sums the two components. Of course, $\ker \iota_* \cong H_0(A)$ as $\Z[t,t\inv]$-modules.

Since $A$ is a union of $a$ contractible spaces, the Mayer--Vietoris sequence now reads:
\[
0 = H_1(B) \to H_1(X') \to H_1(X) \to H_0(B) \to H_0(X') \oplus H_0(A).
\]
Call $P$ the kernel of the map $H_0(B) \to H_0(X') \oplus H_0(A)$. We extract the exact sequence:
\[
0 \to H_1(X') \to H_1(X) \to P.
\]
Since $P$ is a submodule of $\ker \iota_*$, it is torsion and its order divides that of $\ker \iota_*$, which in turn is $(t^a-1)$. The required divisibility is now an easy consequence.

If $a = 1$ or $\im \phi = \im \phi' = a\Z$, $P$ is exactly $\ker \iota_* \cong \Z[t,t\inv]/(t^a-1)$, so the required equality follows.
\end{proof}

\begin{prop}\label{p:borromean}
Let $B_g \subset (S^1\times S^2)^{\#2g}$ be the $g\th$ Borromean knot and $\phi\co H_1((S^1\times S^2)^{\#2g} \setminus B_g) \to \Z$. Call $a$ the positive generator of the image of $\phi$, and suppose that the meridian of $B_g$ is sent to $\pm a$. Then
\[
\Delta^\phi_{B_g}(t) = (t^a-1)^{2g}.
\]
\end{prop}

\begin{proof}
Observe that $B_1$ is the knotification of the 2-component link $J_K := S^1\times \{x,-y\} \subset S^1 \times S^2$. (Here $-y$ simply means that we reverse the orientation of the component $S^1\times \{y\}$.) We have $\pi_1(S^1\times S^2 \setminus J_K) = \pi_1(S^1)\times \pi_1(S^2\setminus \{x,-y\}) \cong \Z \oplus \Z$, and each of the two meridians generates the second summand.

One then easily\footnote{For example, by Fox calculus, since $S^1\times S^2 \setminus J_K \cong S^1\times S^1$, and the two meridians of $J_K$ correspond to the same primitive generator of $\pi_1(S^1\times S^1)$.} computes that $\Delta_{S^1\times S^2, J_K}(t) = t^a-1$, and the required equality follows from the second statement in Proposition~\ref{p:knotification}.

For the general case, we now apply the second statement in Proposition~\ref{p:connectedsum}.
\end{proof}

\subsubsection{The proof of Theorem~\ref{bound}}

We start with a lemma.

\begin{lemma}\label{l:ontoexterior} Let $Y_{\sC^+}$ be the boundary of a neighbourhood of the symplectic curve $\sC^+$.\\
The inclusion of $Y_{\sC^+}$ into $\P^2\setminus\sC^+$ induces a surjection:
\[
\pi_1(Y_{\sC^+}) \twoheadrightarrow \pi_1(\P^2\setminus\sC^+).
\]
\end{lemma}

\begin{proof}
This is straightforward by \cite{sugawara2023handle}, since $\P^2\setminus\sC^+ $ admits a handle decomposition with handles of indices 0, 1 and 2. 
\end{proof}

We are almost ready to prove Theorem~\ref{bound}, namely the local divisibility for Oka polynomials. We briefly recap the situation we are in.

Recall from Section~\ref{ss:sympl_curves} that we gave a surgery description of the boundary manifold $Y_{\sC^+}$. More precisely, we found an explicit link $J \subset (S^1\times S^2)^{\#N}$, obtained from the links of the singularities of $\sC^+$ via as a sequence of coloured connected sums, coloured knotifications, and connected sums with Borromean knots.
It is useful to remind ourselves how these Borromean knots come up: we need to choose a handle decomposition of the surface, relative to a ball containing all the singularities, and we want this handle decomposition to be minimal, so that it contains $g_j$ 1-handles for the $j\th$ component of $\sC$.
The Borromean knots arise as the boundary of the 2-handles of this handle decomposition. Moreover, we can identify the connected summands of the manifold $(S^1\times S^2)^{\#N}$ coming from the Borromean knots with the 1-handles of these decompositions.

We state an easy lemma before getting to the proof of Theorem~\ref{bound}.

\begin{lemma}\label{l:chooseborromean}
Let $D_j \subset \sD$ be an irreducible component of a symplectic curve $\sD \subset \CP$ and $\phi\co H_1(\CP\setminus \sD) \to \Z$ be a homomorphism, and call $a_j = \phi(\mu_j)$, the image of a meridian of $D_j$. We can choose a handle decomposition of $D_j$ as above, so that at most one of (the push-offs of) the corresponding generators $\alpha \in H_1(D_j)$ satisfies $\phi(\alpha)\not\in a_j\Z$
\end{lemma}

\begin{proof}
This is much easier to prove than it was to state. Call $a$ the generator of the image of $\phi$, restricted to the push-offs of $D_j$. (Technically, this is well-defined only up to multiples of $a_j$, but this is ok for our purposes.) B\'ezout's theorem guarantees for us that we can do handle-slides among the 1-handles of $D_j$ (which corresponds to sums and differences in $H_1(D_j)$) until we have a 1-handle $\alpha$ for which $\phi(\alpha) = a$. Sliding every other 1-handle over $\alpha$, if necessary, we can now find the desired handle decomposition.
\end{proof}

\begin{proof}[Proof of Theorem~\ref{bound}]
Fix a homomorphism $\phi\co H_1(\CP \setminus \sC^+) \to \Z$, not sending any meridian to $0$.
For the sake of making the notation lighter, let us write $Y$ instead of $Y_{\sC^+}$ for the boundary of the neighbourhood of $\sC^+$ and $E_J$ instead of $(S^1\times S^2)^{\#N} \setminus J$.

Note that $\phi$ induces homomorphisms $\phi_Y \co H_1(Y) \to \Z$ and $\phi_J: H_1(E_J) \to \Z$ which commute with the inclusion (as well as with the inclusion into $\CP \setminus \sC^+$).

By Lemma~\ref{l:ontoexterior} we have:
\[
\Delta^{\phi}_{{\sC},L} \mid \Delta^{\phi_Y}_{Y}.
\]
In order to prove the divisibility, it will suffice to compute $\Delta^{\phi_Y}_{Y}$.

We will actually estimate $\Delta^{\phi_Y}_{Y}$, first by estimating $\Delta^{\phi_J}_J$ and then by understanding the relationship between the two Alexander polynomials. (Note that $\pi_1(Y)$ is a quotient of $\pi_1(E_J)$, which implies that $\Delta^{\phi_Y}_{Y}$ divides $\Delta^{\phi_J}_J$.)

Recall that $J$ is obtained from (any) coloured connected sum of the links of $\sC^+$ by doing as many coloured knotifications as possible, and then by taking a connected sum with as many Borromean knots as the components of $\sC$ with positive genus.

There are two things to check: the number of coloured connected sums and coloured knotifications we do for each colour (i.e. for each component of $\sC^+$), and that the assumptions of Proposition~\ref{p:borromean} are satisfied. We start with the latter. Call $J_c$ the link we obtain after doing all the coloured knotifications. 

Fix a component $\sC_j$ of $\sC^+$. The ``Borromean'' part of $(S^1\times S^2)^{\#N}$ corresponding to $\sC_j$ comes from a handle decomposition of $\sC_j$ as above. Thanks to Lemma~\ref{l:chooseborromean}, we can choose all but the first of these components to satisfy $\phi_B \co H_1((S^1\times S^2)^{\#2} \setminus B_1) \to \Z$ to have $\im \phi_B = a_j\Z$ (as usual, $a_j$ is the image of the meridian of $\sC_j$ via $\phi$). In particular, we can apply Propositions~\ref{p:borromean} and~\ref{p:connectedsum} to conclude that $\Delta^{\phi_J}_J(t)$ divides $\Delta^{\phi_{J_c}}_{J_c}(t)\cdot \prod_j (1-t^{a_j})^{2g_j+1}$. \\
Notice that, from one hand, if all $a_j=1$ then by Proposition~\ref{p:connectedsum},  $\Delta^{\phi_J}_J(t)$ divides $\Delta^{\phi_{J_c}}_{J_c}(t)\cdot \prod_j (1-t^{a_j})^{2g_j}$. From other hand, in case all $\mathcal{C}_j$ are rational curves (i.e. of genus 0) then $\Delta^{\phi_J}_J(t) \ \mid \ \Delta^{\phi_{J_c}}_{J_c}(t)$. 

Similarly, applying Propositions~\ref{p:connectedsum} and~\ref{p:knotification}, we obtain that $\Delta^{\phi_{J_c}}_{J_c}(t)$ divides $\prod \Delta^{\phi_{K_i}}_{K_i}(t) \cdot \prod (1-t^{a_j})^{h_j}$, where $h_j$ is the number of paths connecting the singularities of $\sC_j$ that are needed to cut $\sC_j$ open into a surface of genus $g_j$ with one puncture. (In other words, if we let $\beta_{ij}$ be the number of branches of $\sC_j$ at the singular point $p_i$, we have $h_j = \big(\sum_i \beta_{ij}\big) -1$ for each $j$).``
Moreover, if all $a_j=1$ then by Proposition ~\ref{p:connectedsum}, $ \Delta^{\phi_{J_c}}_{J_c}(t)$ divides $\prod \Delta^{\phi_{K_i}}_{K_i}(t) \cdot \prod (1-t^{a_j})^{h_j-(s-1)}$.\\
Thererfore, we have that  $\Delta^{\phi}_{\sC,L}\ \mid \  \Delta^{\phi_{J}}_{J}(t) \ \mid \ \prod \Delta^{\phi_{K_i}}_{K_i}(t) \cdot \prod (1-t^{a_j})^{2g_j+1+h_j}$. One can then conclude using the fact that $\chi(\sC^{\rm ns})=\chi(\sC)-s$ and $\chi(\sC_j^{\rm ns})=\chi(\sC_j)-s_j$ so that the exponent of $(1-t^{a_j})$ becomes $2g_j+1+h_j=2g_j+1+(\sum_{p\in sing(\sC_j)}\beta(p))-1=s_j-(2-2g_j-\sum_{p\in sing(\sC_j)}(\beta(p)-1))+2=-\chi(\sC^{ns}_j)+2$. \\If all $\sC_j$ are rational, then $\Delta^{\phi}_{\sC,L}\  \mid \ \prod \Delta^{\phi_{K_i}}_{K_i}(t) \cdot \prod (1-t^{a_j})^{-\chi(\sC^{ns}_j)+1}$. Moreover, for all $a_j=1$,  $\Delta^{\phi}_{\sC,L}\  \mid \ \prod \Delta^{\phi_{K_i}}_{K_i}(t) \cdot \prod_{j=0}^{\ell} (1-t)^{-\chi((\sC^+)^{ns}+\ell-s+2}$ with $-\chi((\sC^+)^{ns}+\ell-s+2=b_1(\sC^+)$.
\end{proof}
The factors $(1-t)$ in $\Delta_{\sC,L}$ play a special role, and we want to give bounds on their multiplicity. When $\Delta_{\sC,L}$ vanishes, then $\Delta_{\sC,L}$ is divisible by arbitrarily large powers of $(1-t)$ and we define the multiplicity of $(1-t)$ as a factor of $\Delta_{\sC,L}$ to be $\infty$, whereas whenever $\Delta_{\sC,L}$ is non-zero the multiplicity of $(1-t)$ is finite.

\begin{prop}\label{l-1}
Let $\sC \subset \mathbb{C}\mathbb{P}^2$ be a curve of degree $d$ and $\ell$ irreducible components. Choose a line at infinity $L$, and let $m$ be the multiplicity of $(t-1)$ as a factor of the Alexander polynomial $\Delta_{\sC,L}$ (which could be $\infty$). Then:
\[
\ell - 1 \leq m.
\]
Moreover, if $L$ is transverse to $\sC$, then $\Delta_{\sC,L}$ is non-zero and
\[
m \leq d-1.
\]
Finally, if $\sC$ is irreducible, then $m = 0$.
\end{prop}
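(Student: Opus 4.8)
For all three assertions I would compute the multiplicity of $(t-1)$ through the $\Lambda := \Q[t,t\inv]$-module structure of $H_1(X;\Q)$, where $X \to \CP \setminus \sC^+$ is the infinite cyclic cover defining $\Delta_{\sC,L}$. Writing $H_1(X;\Q) \cong \Lambda^r \oplus \bigoplus_i \Lambda/(\lambda_i)$, the point is that $\Delta_{\sC,L}=0$ exactly when $r\geq 1$ (in which case $m=\infty$), while if $r=0$ then $\Delta_{\sC,L} \doteq \prod_i \lambda_i$ and $m = \sum_i \ord_{t-1}(\lambda_i)$. First I would relate $(t-1)$ to the base. Since $\phi_{\sC,L}$ is surjective, $X$ is connected, so $H_0(X;\Q)=\Q$ with $t$ acting trivially. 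The Milnor (Wang) exact sequence with $\Q$-coefficients reads
\[
H_1(X;\Q) \xrightarrow{\,t-1\,} H_1(X;\Q) \to H_1(\CP\setminus\sC^+;\Q) \xrightarrow{\,\del\,} H_0(X;\Q) \xrightarrow{\,t-1\,} H_0(X;\Q),
\]
and as $t-1$ vanishes on $H_0(X;\Q)=\Q$, the boundary map $\del$ is onto this one-dimensional space. Using $H_1(\CP\setminus\sC^+;\Q)=\Q^\ell$ (from $H_1=\Z^{\ell+1}/\langle(1,d_1,\dots,d_\ell)\rangle$), exactness gives $\dim_\Q \coker\big(t-1\co H_1(X;\Q)\to H_1(X;\Q)\big)=\ell-1$. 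On the other hand $\coker(t-1)=H_1(X;\Q)\otimes_\Lambda \Lambda/(t-1)$, whose dimension from the decomposition is $r+\#\{i:(t-1)\mid\lambda_i\}$. Hence
\[
r+\#\{i:(t-1)\mid\lambda_i\}=\ell-1 .
\]

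\textbf{Lower bound and the irreducible case.} These follow at once. If $r\geq1$ then $m=\infty\geq\ell-1$; if $r=0$ then $m=\sum_i\ord_{t-1}(\lambda_i)\geq\#\{i:(t-1)\mid\lambda_i\}=\ell-1$, so $\ell-1\leq m$ in every case. When $\sC$ is irreducible, $\ell=1$ forces both $r=0$ and $\#\{i:(t-1)\mid\lambda_i\}=0$; thus $\Delta_{\sC,L}\neq0$ and no $\lambda_i$ is divisible by $(t-1)$, whence $m=0$.

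\textbf{The transverse case.} Here I would invoke the divisibility at infinity, Theorem~\ref{t:infty}, namely $\Delta_{\sC,L}\mid\Delta_{K_\infty}$. When $L$ is transverse, $\sC$ meets $L$ in $d$ points, and by positivity of intersections the branches have pairwise linking number $+1$, so the link at infinity $K_\infty\subset\del\nu(L)=S^3$ is the torus link $T(d,d)$. For $d\geq2$, combining the multivariable formula $\Delta_{T(d,d)}(t_1,\dots,t_d)=(t_1\cdots t_d-1)^{d-2}$ from Example~\ref{ex:deltahat} with the standard Torres reduction to the morphism sending every meridian to $1$ yields $\Delta_{K_\infty}(t)\doteq(t-1)(t^d-1)^{d-2}$ (the case $d=1$ being trivial). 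This is nonzero, and since a divisor of a nonzero polynomial is nonzero, $\Delta_{\sC,L}\neq0$. Finally $t^d-1$ has $(t-1)$-multiplicity exactly $1$, so $\ord_{t-1}(\Delta_{K_\infty})=1+(d-2)=d-1$, and the divisibility forces $m\leq d-1$.

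\textbf{Main obstacle.} The module-theoretic computations are routine; the step demanding care is the geometric identification $K_\infty=T(d,d)$ for transverse $L$ together with the correct single-variable reduction of its Alexander polynomial, since getting either the extra $(t-1)$ factor or the exponent $d-2$ wrong would spoil the sharp bound $m\leq d-1$. One must also ensure the Milnor sequence is taken with $\Q$-coefficients and that $\del$ is genuinely surjective, i.e.\ that $X$ is connected, which relies on the surjectivity of $\phi_{\sC,L}$.
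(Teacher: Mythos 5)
Your proposal is correct and follows essentially the same route as the paper: the Wang/Milnor exact sequence of the infinite cyclic cover to identify $\dim_\Q\coker(t-1)$ on $H_1$ with $\ell-1$ (giving the lower bound and the irreducible case), and Theorem~\ref{t:infty} together with $\Delta_{T(d,d)}(t)=(t-1)(t^d-1)^{d-2}$ for the upper bound. The only cosmetic differences are that you treat the free summand uniformly via tensoring with $\Lambda/(t-1)$ where the paper first splits off the torsion hypothesis, and you derive the one-variable polynomial of $T(d,d)$ by Torres reduction rather than quoting Milnor directly.
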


\begin{proof}
We start by proving the lower bound on $m$. If $\Delta_{\sC,L} \equiv 0$, then $m = \infty$, so the inequality is trivially satisfied. Suppose therefore that $\Delta_{\sC,L} \neq 0$, or equivalently that $H_1(\tilde X; \Q)$ is a torsion $\Q[t,t\inv]$-module. (As above, $\tilde X$ is the infinite cyclic cover of $X=\CP\setminus (\sC \cup L)$ associated to the morphism that sends each meridian of $\sC$ to $1$, and $t$ is the action of the generator of the deck transformation group of the cover $\tilde X \to X$.)

Let us look at the long exact sequence in homology associated to the short exact sequence of complexes:
\[
0 \to C_*(\tilde X;\Q) \stackrel{1-t}{\longrightarrow} C_*(\tilde X;\Q) \stackrel{\pi_*}{\longrightarrow} C_*(X) \to 0,
\]
where $\pi \co \tilde X \to X$ is the covering map.

In this proof, we will be using $\Q$-coefficients in homology throughout, and we will not make it explicit in the notation. Since we are assuming that $H_1(\tilde X)$ is torsion, we can write it as
\[
H_1(\tilde X) = T \oplus T_1,
\]
where $T_1 = \bigoplus_{j=1}^p \Q[t^{\pm 1}]/(1-t)^{a_j}$ (with $a_j \ge 1$ for each $j$) and the order of $T$ is coprime with $(1-t)$. Note that the multiplicity of $(1-t)$ in $\Delta_{\sC,L}$ is simply $m = \sum_{j = 1}^p a_j \ge p$.
The long exact sequence now gives us:
\[
\xymatrix{
H_1(\tilde X) \ar[r]^{1-t}\ar[d]^{\cong} & H_1(\tilde X) \ar[r] \ar[d]^{\cong} & H_1(X) \ar[d]^{\cong} \ar[r]^\alpha & H_0(\tilde X) \ar[d]^{\cong}\ar[r]^{1-t} & H_0(\tilde X) \ar[r]^\beta \ar[d]^{\cong} & H_0(X)\ar[d]^{\cong}\\
T\oplus T_1 \ar[r] & T \oplus T_1 \ar[r] & \Q^\ell \ar[r]^\alpha & \Q \ar[r] & \Q \ar[r] & \Q
}
\]
The map $\beta$, induced by $\pi$, is obviously an isomorphism, so the map $\alpha$ is onto, and we can now obtain another exact sequence
\[
\xymatrix{
H_1(\tilde X) \ar[r]^{1-t}\ar[d]^{\cong} & H_1(\tilde X) \ar[r] \ar[d]^{\cong} & \ker \alpha \ar[r]\ar[d]^{\cong}& 0\\
T\oplus T_1 \ar[r] & T \oplus T_1 \ar[r] & \Q^{\ell-1}\ar[r]& 0.
}
\]
Now, the map $(1-t)$ is an isomorphism on $T$ and has cokernel $\Q^p$ on $T_1$, so $p = \ell-1$, and in particular
\[
\ell - 1 = p \le m = \sum_{j=1}^p a_j,
\]
as required.

We now turn to the upper bound, when $L$ is chosen to be generic. In this case, the link at infinity of $\sC$ with respect to $L$ is the (generalised) Hopf link $K_{\infty} =T(d,d) \subset S^3$ with $d$ components, for which it is known that the Alexander polynomial is $\Delta_{T(d,d)} = (t-1)(t^d-1)^{d-2}$ (see~\cite[Section~10]{Milnor-hypersurfaces}). From Theorem~\ref{t:infty} it follows that $\Delta_{\sC,L}$ is also non-zero and that the multiplicity of $(1-t)$ in $\Delta_{\sC,L}$ is bounded from above by $d-1$, as desired.

Finally, if $\sC$ is irreducible, then $\ker\alpha = 0$, so $T_1 = 0$, which in turn implies that $\Delta_{\sC,L}$ is coprime with $1-t$.
\end{proof}

\begin{rk}\label{r:vanishingAlex}
The second statement in Proposition~\ref{l-1} holds whenever $\Delta_{K_\infty}$ does not identically vanish, and even under the more general assumption that $H_1(\tilde X)$ is a torsion $\Q[t,t\inv]$-module (so that the associated Alexander polynomial is not identically 0).

One can easily construct examples when $H_1(\tilde X)$ is non-torsion. For instance, consider the curve $\sC = \bigcup_{k=1}^d\{x = kz\}$, and choose the line at infinity to be $\{z = 0\}$. In this case, $\CP\setminus \sC^+$ is diffeomorphic to $\C \times (\C \setminus \{1,\dots,k\}) \subset \C^2$, where the meridians of the components of $\sC$ can be chosen to be curves in the plane $\{0\} \times \C$. Since this is a 4-dimensional 1-handlebody, the action of the group ring of the deck transformation group on the infinite cyclic cover $\tilde X$ is free, so the $H_1(\tilde X;\Q)$ is a (non-trivial) free $\Q[t,t\inv]$-module, and hence the Alexander polynomial of $\sC$ (with respect to the line $\{z=0\}$) vanishes identically.
\end{rk}

\begin{cor}
For a symplectic line arrangement $\sC\subset \C\P^2$ of degree $\ell$ (that is, $\sC$ is a union of $\ell$ symplectic lines in $\C\P^2)$ and $L$ is a generic line at infinity, we have that $\Delta_{\sC,L}(t) \mid \prod_i \hat \Delta_{K_i}(t)$.
\end{cor}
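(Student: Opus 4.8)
The plan is to upgrade Theorem~\ref{bound} in this special case by showing that, for a line arrangement with generic line at infinity, the extra factor $(1-t)^{b_1(\sC^+)}$ on the right-hand side is redundant. Since $\Q[t,t\inv]$ is a PID in which $1-t$ is prime, an equivalent way to phrase the desired conclusion is: the part of $\Delta_{\sC,L}$ coprime to $1-t$ divides the corresponding part of $\prod_i\hat\Delta_{K_i}$, and the multiplicity of $1-t$ in $\Delta_{\sC,L}$ is at most its multiplicity in $\prod_i\hat\Delta_{K_i}$. So I would treat the $1-t$ factor and the coprime part separately.

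First I would control the left-hand side. As $L$ is generic it is transverse to $\sC$, so Proposition~\ref{l-1} guarantees that $\Delta_{\sC,L}$ is non-zero and that the multiplicity $m$ of $(1-t)$ (equivalently of $t-1$) as a factor of $\Delta_{\sC,L}$ is finite. Moreover the arrangement has $\ell$ lines, hence degree $d=\ell$, and the two bounds $\ell-1\le m\le d-1$ of Proposition~\ref{l-1} pin down $m=\ell-1$ exactly. Next I would analyse the right-hand side. The singularities of $\sC^+=\sC\cup L$ are of two kinds: the ordinary multiple points of $\sC$ itself (where $k$ lines cross, with link $T(k,k)$ and $\hat\Delta_{K_i}=\Delta_{T(k,k)}=(t-1)(t^k-1)^{k-2}$, these lying off $L$), and the $\ell$ transverse double points where $L$ meets the lines of $\sC$ (with $\hat\Delta=\hat\Delta_{T(2,2)}=1-t$ by Example~\ref{ex:deltahat}). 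In particular every $\hat\Delta_{K_i}$ is non-zero, so I may write $\Delta_{\sC,L}=(1-t)^m P$ and $\prod_i\hat\Delta_{K_i}=(1-t)^M Q$ with $P,Q$ coprime to $1-t$; comparing the parts coprime to $1-t$ in the divisibility $\Delta_{\sC,L}\mid\prod_i\hat\Delta_{K_i}\cdot(1-t)^{b_1(\sC^+)}$ supplied by Theorem~\ref{bound} then yields $P\mid Q$. For the exponent I only need a lower bound: the $\ell$ distinct nodes $L\cap\sC$ already contribute $\ell$ factors of $1-t$ to the product, so $M\ge\ell$.

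Combining the two computations, $m=\ell-1<\ell\le M$ together with $P\mid Q$ gives $(1-t)^m P\mid(1-t)^M Q$, that is $\Delta_{\sC,L}\mid\prod_i\hat\Delta_{K_i}$, which is the claim. I do not expect a genuine obstacle here, since the heavy lifting is already in Theorem~\ref{bound} and Proposition~\ref{l-1}; the only points requiring care are that every $\hat\Delta_{K_i}$ is non-zero (so that the factorisation argument is legitimate) and that the links of the singularities of $\sC^+$ really are the expected $T(k,k)$ and Hopf links — both of which follow from the fact that distinct $J$-holomorphic lines meet transversally and positively, making every singularity of $\sC^+$ an ordinary multiple point. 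If a self-contained formula is preferred, one could instead compute $M$ exactly as $M=\sum_a(k_a-1)+\ell=b_1(\sC^+)+\ell$ using Lemma~\ref{b1}, but the crude estimate $M\ge\ell$ already suffices to absorb $m=\ell-1$.
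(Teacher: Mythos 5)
Your proof is correct and follows essentially the same route as the paper: split off the $(1-t)$-part, use Proposition~\ref{l-1} (with $d=\ell$ and $L$ transverse) to get multiplicity $\ell-1$ on the left, Milnor's computation of the local polynomials to control the multiplicity on the right, and Theorem~\ref{bound} for the part coprime to $1-t$. The only cosmetic difference is that you lower-bound the right-hand multiplicity by the $\ell$ nodes on $L$ alone, whereas the paper invokes the combinatorial inequality $\ell-1\le\sum_i(\ell_i-1)$; both suffice.
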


\begin{proof}
By Proposition~\ref{l-1}, $\Delta_{\sC,L}(t) = (1-t)^{\ell-1}\cdot \Delta'_{\sC,L}(t)$, where $\gcd(\Delta'_{\sC,L}(t),1-t)=1$. By \cite[Section 10]{Milnor-hypersurfaces}, $\hat \Delta_{K_i}(t) = (1-t)^{\ell_i-1} \cdot \hat \Delta'_{K_i}(t)$, where $\ell_i$ the number multiplicity of the $i^{\rm th}$ singularity of $\sC^+$ (which is also the number of branches, since lines can only meet transversely) and $\gcd(\hat \Delta'_{K_i}(t),1-t)=1$ for each $i$. Therefore by Theorem~\ref{bound}, we have that $\Delta'_{\sC,L}(t)\mid \hat \Delta'_{K_i}(t)$  and we conclude using the combinatorial inequality $\ell-1 \leq \sum_i \ell_i -1$.
\end{proof}
\appendix
\section{Alexander polynomials divisibility for plane algebraic curves, after Libgober} \label{appendix}
Based on previous sections, we propose a brief twist of Libgober's proofs of the divisibility theorem for algebraic plane curves  that is proved in \cite{Libcyclic}, in the case where the line at infinity is chosen generically. 

\begin{thm}[\cite{Libcyclic}]\label{thmDiv}
Let $\sC \subset \CP$ be a complex curve and $L$ be a line that is transverse to $\sC$. Then, in the notation of Theorem~\ref{t:main}:
\begin{itemize}\itemsep 4pt
\item[(i)] $\Delta_{\sC,L}(t) \mid \Delta_{K_\infty}(t)$;
\item[(ii)] $\Delta_{\sC,L}(t) \mid \prod_{i} \hat \Delta_{K_{i}} (t)$.
\end{itemize}
\end{thm}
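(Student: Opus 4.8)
The plan is to deduce both divisibilities from the fibration/braid-monodromy picture of Section~\ref{S3}, specialised to the complex algebraic setting, where Sugawara's handle decomposition of $\CP\setminus\sC^+$~\cite{sugawara2023handle} applies without modification. In both cases the mechanism is Lemma~\ref{Div}: I would exhibit a group $G_1$ surjecting onto $G_2=\pi_1(\CP\setminus\sC^+)$ compatibly with the morphisms to $\Z$, and read off the divisibility of Alexander polynomials. Part~(i) is formally identical to Theorem~\ref{t:infty}; the point of the appendix is that for a generic complex $L$ the whole construction becomes elementary and $3$-dimensional, and that part~(ii) can be sharpened to remove the $(1-t)^{b_1(\sC^+)}$ factor appearing in Theorem~\ref{bound}.

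For part~(i) I would argue exactly as in the proof of Theorem~\ref{t:infty}: by Sugawara's decomposition, $\CP\setminus\sC^+$ is obtained from $Y_\infty=S^3\setminus\nu(K_\infty)$ by attaching only handles of index at least $2$ (the $m_i-1$ two-handles over each $B_i$, then the three-handles over $U_0$). Hence the inclusion $Y_\infty\hookrightarrow\CP\setminus\sC^+$ is surjective on $\pi_1$, and the morphism $\phi_{\sC,L}$ restricts on $\pi_1(Y_\infty)=\pi_1(S^3\setminus K_\infty)$ to the standard morphism defining $\Delta_{K_\infty}$. Lemma~\ref{Div} then gives $\Delta_{\sC,L}\mid\Delta_{K_\infty}$; since $L$ is transverse, $K_\infty=T(d,d)$ and the right-hand side is completely explicit.

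For part~(ii) the low-dimensional input is the Zariski--van Kampen presentation of $G_2$ read off from the braid monodromy: $d$ meridional generators living in a regular fibre $F\cong\C\setminus\{1,\dots,d\}$, together with relations partitioned according to the singular values $y_1,\dots,y_s$, where the block at $y_i$ consists of the local braid-monodromy relations of the singularity whose link is $K_i$. I would compare $G_2$ with the local link groups $\pi_1(S^3\setminus K_i)$ fibre by fibre, identifying the contribution of the $i$-th block of the Fox matrix with the marked local polynomial $\hat\Delta_{K_i}$ (the variable $t_0$ recording the meridian of $L$, sent to $t^{-d}$). Transversality of $L$ guarantees that each singularity lying on $L$ is an ordinary node, contributing precisely the factor $\hat\Delta_{T(2,2)}=1-t$ of Example~\ref{ex:deltahat}; assembling the blocks through Lemma~\ref{Div} then yields $\Delta_{\sC,L}\mid\prod_i\hat\Delta_{K_i}$.

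The main obstacle is the $(1-t)$ bookkeeping in part~(ii), that is, showing that the local-to-global comparison does \emph{not} reintroduce the spurious $(1-t)^{b_1(\sC^+)}$ correction of Theorem~\ref{bound}. In the symplectic proof those extra factors come from knotifying components of positive genus and from the coloured connected sums used to build the attaching link $J$; the complex-algebraic case is better behaved because Libgober's cell complex~\cite{Lib1} is minimal, so the genus and the gluing of the fibres contribute no extra torsion beyond the local data at the $y_i$. Making this efficiency precise---i.e. verifying that the block at each $y_i$ computes exactly $\hat\Delta_{K_i}$ and that the regular part of the fibration contributes nothing further---is where the real work lies, rather than in any single hard estimate.
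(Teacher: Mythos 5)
Your part~(i) is correct but it is not the appendix's argument: you simply re-run the proof of Theorem~\ref{t:infty} (Sugawara's handle decomposition relative to $Y_\infty$, with only handles of index $\geq 2$ attached), which indeed specialises to complex curves. The appendix instead gives a more elementary route for generic $L$: the pencil $p=(F:G^d)$ is a Lefschetz fibration over $\C^*$ away from $\sC\cup L$, so $\CP\setminus\sC^+$ is obtained from $p\inv(\{|z|\ge R\})\simeq S^3\setminus\nu(K_\infty)$ by attaching $2$-handles only. Either way one gets the surjection $\pi_1(S^3\setminus K_\infty)\twoheadrightarrow\pi_1(\CP\setminus\sC^+)$ and concludes by Lemma~\ref{Div}, so this part stands.

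Part~(ii) has a genuine gap, and it sits exactly where you say ``the real work lies''. You claim the local-to-global comparison avoids the factor $(1-t)^{b_1(\sC^+)}$ of Theorem~\ref{bound} because ``Libgober's cell complex is minimal, so the genus and the gluing of the fibres contribute no extra torsion beyond the local data''. That is not substantiated, and it is not the mechanism by which the extra factors disappear: the genus and the reducibility of $\sC$ genuinely contribute powers of $(1-t)$ to the Alexander polynomial of the attaching link $J\subset\del N_{\sC^+}$ (Lemmas~\ref{l:knotify} and~\ref{l:borromean}), and nothing in the Zariski--van Kampen presentation by itself prevents them from surviving in the global polynomial; likewise, asserting that ``the block at each $y_i$ computes exactly $\hat\Delta_{K_i}$'' is essentially the statement to be proved, not a step towards it. The paper's proof does not try to avoid these factors: it first establishes $\Delta_{\sC,L}\mid(1-t)^{b_1(\sC^+)}\prod_i\hat\Delta_{K_i}$ exactly as in Theorem~\ref{bound}, and then cancels the surplus powers of $(1-t)$ by a separate multiplicity count. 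Namely, if $\sC$ is irreducible then $\Delta_{\sC,L}$ is coprime to $1-t$ (Proposition~\ref{l-1}); otherwise, for $L$ generic, the multiplicity of $(1-t)$ in $\Delta_{\sC,L}$ is exactly $\ell-1$ (by Oka's lemma) and in $\hat\Delta_{K_i}$ it is exactly $\ell_i-1$, so writing $\Delta_{\sC,L}=(1-t)^{\ell-1}\Delta'_{\sC,L}$ and $\hat\Delta_{K_i}=(1-t)^{\ell_i-1}\hat\Delta'_{K_i}$ and using $\ell-1\le\sum_i(\ell_i-1)$ reabsorbs all the $(1-t)$'s into $\prod_i\hat\Delta_{K_i}$. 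Without an argument of this kind (or a genuine fibre-by-fibre identification of the Fox-matrix blocks with the local polynomials, which is Libgober's original and substantially harder route), your sketch only yields the statement of Theorem~\ref{bound}, not the sharper divisibility~(ii).
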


\subsection{Sketch of a new proof of Theorem \ref{thmDiv}:}
\begin{itemize}\itemsep 4pt
\item[(i)] Let $\mathcal{C}$ be defined by the polynomial $F$, i.e. $\mathcal{C} = \{F = 0\}$.
Suppose first that the line at infinity is chosen to be generic, and is defined by the (linear) polynomial $G$. Consider the following map, $p$, defined on $\CP \setminus (L \cap \mathcal{C})$:
\[
p\colon \CP \setminus (L \cap \mathcal{C}) \to \CPone, \quad p\colon (x:y:z) \mapsto (F(x:y:z) : G(x:y:z)^d).
\]
(In other words, we are looking at the pencil of degree-$d$ curves generated by $\mathcal{C}$ and the line $L$, counted with multiplicity $d$).
Since $L$ is chosen to be generic, $p$ is a Lefschetz fibration away from $\mathcal{C}$ and $L$. In particular, $\CP\setminus (\mathcal{C}\cup L)$ is the total space of a Lefschetz fibration over $\C^*$. It follows that there exists $R \gg 0$ such that $\CP\setminus (\mathcal{C}\cup L)$ is obtained from $p\inv(\{|z| \ge R\})$ by attaching one 2-handle for each singularity of $p$, and so its fundamental group is a quotient of the fundamental group of $p\inv(\{|z| \ge R\})$. The latter space retracts on the complement of the link at infinity (by definition of link at infinity), so we get a surjection of fundamental groups:
\[
\pi_1(S^3 \setminus K_\infty) \to \pi_1(\CP \setminus (\mathcal{C} \cup L)),
\]
which is easily seen to respect meridians. In particular, by Lemma \ref{Div} we get the desired divisibility.


\item[(ii)] For $\sC^+=\sC \cup L$, we consider on one hand the following divisbility:
\[\Delta_{\pi_1(Y_{\sC^+})}(t) \mid \Delta(\pi_1((S^1\times S^2)^{\#b_1({\sC^+})} \setminus J)) \mid
(1-t)^{b_1({\sC^+})}\cdot\prod_{i}\hat \Delta_{\pi_1(S^3\setminus K_i)}(t)\]
with $J$ the attaching link defined in Section~\ref{step1 symp}.

On the other hand, since  $\CP \setminus \sC^+$ is a $2$-dimensional Stein manifold, it is then the interior of a handlebody such that each handle has index $\leq 2$ (see~\cite{hdle} or more explicitly~\cite{sugawara2023handle}). We have that
\[
\xymatrix{ \pi_1(Y_{\sC^+}) \ar@{->}[r] &\pi_1(\CP\setminus\sC^+)\ar@{->}[r]& 1}.
\]
By Lemma~\ref{Div},
\[
\Delta_{\sC,L}(t) \mid \Delta_{\pi_1(Y_{\sC^+})}(t) \mid (1-t)^{{b_1({\sC^+})}}\cdot\prod_{i}\hat \Delta_{K_i}(t).
\]

If $\mathcal{C}$ is irreducible, then the map  $H_1(\widetilde{\CP\setminus \mathcal{C}^+}) \xrightarrow[]{1-t} H_1(\widetilde{\CP\setminus \mathcal{C}^+})$ is onto, hence $\Delta_{\sC,L}(t)$ and $(1-t)$ are coprime. Therefore, $\Delta_{\sC,L}(t) \mid \prod_{i}\hat \Delta_{K_i}(t)$.
Otherwise, if $L$ is generic, by  \cite[Lemma 21]{Oka}, we have that the factor $(1-t)$ has multiplicity exactly $\ell-1$ in the global Alexander polynomial of algebraic curves and $(\ell_i -1)$ in each local Alexander polynomial $\hat \Delta_{K_i}(t)$ such that $\ell_i$ denotes the number of local branches.

Let us write $\Delta_{\sC,L}(t) = (1-t)^{\ell -1} \Delta'_{\sC,L}(t)$ with $\Delta'_{\sC,L}(1) \neq 0$. Analogously, we write $\hat \Delta_{K_i}(t) = (1-t)^{\ell_i -1} \hat \Delta'_{K_i}(t)$, with $\Delta'_{K_i}(1) \neq 0$. Hence  the following divisibility
\[
(1-t)^{\ell -1} \Delta'_{\sC,L}(t)\mid (1-t)^{b_1({\sC^+})}(1-t)^{ \sum_i(\ell_i-1)}\prod_i\hat \Delta'_{K_i}(t)
\]
implies that  $\Delta'_{\sC,L}(t) \mid \prod\hat \Delta'_{K_i}(t)$. And since $\ell-1 \leq \sum_i (\ell_i-1)$, we conclude that  $\Delta_{\sC,L}(t) \mid \prod_{i}\hat \Delta_{K_i}(t)$.
\end{itemize}
\bibliography{divisibility}
\bibliographystyle{alpha}

\end{document}